\numberwithin{equation}{section}
\newtheorem{theorem*}{Theorem}
\newtheorem{definition*}{Definition}
\newtheorem{remark*}{Remark}
\newtheorem{theorem}{Theorem}[section]
\newtheorem{lemma}[theorem]{Lemma}
\newtheorem{corollary}[theorem]{Corollary}
\newtheorem{proposition}[theorem]{Proposition}
\newtheorem{definition}[theorem]{Definition}
\newtheorem{remark}[theorem]{Remark}
\newcommand{\ma}[1]{\ensuremath{\mathbb{#1}}}
\font\bb=msbm7 at 10 pt
\def \Z {\hbox{\bb Z}}
\def \Q {\hbox{\bb Q}}
\def \N {\hbox{\bb N}}
\def \F {\hbox{\bb F}}
\def \Tr {\mbox{\rm{Tr}}}
\def \Q {\hbox{\bb Q}}
\def \I {\mbox{\bf I}}
\def \AS {\mathscr{AS}}
\newcommand{\NP}{\ensuremath{\mbox{\rm{NP}}}}
\newcommand{\GNP}{\ensuremath{\mbox{\rm{GNP}}}}
\newcommand{\Ima}{\ensuremath{\mbox{\rm{Im }}}}
\newcommand{\Ker}{\ensuremath{\mbox{\rm{Ker}}}}
\author{R\'egis Blache}
\address{\'Equipe LAMIA,
\'ESP\'E de Guadeloupe}
\email{rblache@espe-guadeloupe.fr}
\title[First vertices]{Valuations of exponential sums and Artin-Schreier curves}
\begin{document}

\begin{abstract}
Let $p$ denote an odd prime. In this paper, we are concerned with the $p$-divisibility of additive exponential sums associated to one variable polynomials over a finite field of characteristic $p$, and with (the very close question of) determining the Newton polygons of some families of Artin-Schreier curves, i.e. $p$-cyclic coverings of the projective line in characteristic $p$. 

We first give a lower bound on the $p$-divisibility of exponential sums associated to polynomials of fixed degree. Then we show that an Artin-Schreier curve defined over a finite field of characteristic $p$ cannot be supersingular when its genus $g$ has the form $(p-1)\left(i(p^n-1)-1\right)/2$ for some $1\leq i\leq p-1$ and $n\geq 1$ such that $n(p-1)>2$.
We also determine the first vertex of the generic Newton polygon of the family of $p$-rank $0$ Artin-Schreier curves of fixed genus, and the associated Hasse polynomial.
\end{abstract}

\subjclass[2000]{}
\keywords{Valuation of character sums, Newton polygons, Artin-Schreier curves, supersingular curves}

\maketitle

\section*{Introduction}

Let $p$ denote an odd prime, and $k=\F_q$, $q=p^m$ a finite field of characteristic $p$. We fix once and for all a non trivial additive character $\psi$ of $k$. For any one variable polynomial $f\in k[x]$, we define the exponential sum
$$S(f):=\sum_{x\in k} \psi(f(x))$$
This is an algebraic integer in $\Z[\zeta_p]$, where $\zeta_p$ denotes a fixed primitive $p$-th root of unity. Since we are interested in the $p$-adic valuation of this sum, we will consider it as
an element in $\Z_p[\zeta_p]$. The $q$-adic valuation over $\Z_p$ normalized by $v_q(q)=1$ extends to this ring in a unique way, and we look for a lower bound for the number $v_q(S(f))$ when $f$ varies among degree $d$ polynomials having their coefficients in $k$.

The one dimensional case of \cite[Theorem 1.2]{as} gives the lower bound $v_q(S(f))\geq \frac{1}{d}$. It is known that this bound is tight when $p\equiv 1 \mod d$ \cite{ro}. Actually it is very close to be tight when the characteristic is large; when we have $p>2 d$, a tight bound is given by $v_q(S(f))\geq \frac{1}{p-1}\lceil \frac{p-1}{d}\rceil$, see \cite[Theorem 1.1]{sz2}.

When the degree is large compared to the characteristic, Moreno and Moreno \cite{mm} have shown the lower bound $v_q(S(f))\geq \frac{1}{\sigma_p(f)}$, where $\sigma_p(f)$ denotes the maximum of the $p$-weights (sums of base $p$ digits) of the exponents effectively appearing in $f$. Note that these last bounds depend on the characteristic $p$.

In this paper, we show the following bounds (note that one can always assume, by Artin-Schreier reduction \cite[Exemple 3.5]{sga}, that the degree of $f$ is prime to $p$)
\begin{theorem*}
\label{bounds}
Let $k$ denote a finite field of odd characteristic $p$, and $f\in k[x]$ be a polynomial of degree $d\geq\frac{p-1}{2}$. We have
$$ v_q(S(f))\geq \left\{
\begin{array}{rcl}
 \frac{1}{n(p-1)} & \textrm{if} & p^n-1\leq d \leq p^{n+1}-p-1\\
 \frac{2}{(2n+1)(p-1)} & \textrm{if} & p^{n+1}-p+1\leq d \leq p^{n+1}-2 \textrm{ and } n\geq 1 \\
 \frac{2}{p-1} & \textrm{if} & \frac{p-1}{2}\leq d \leq p-2,~p\geq 5 \\
\end{array}
\right.$$
moreover these bounds are tight, in the sense that there exists a finite field $k$ of characteristic $p$ and a polynomial $f$ of degree $d$ having its coefficients in $k$ such that the inequalities above are equalities.
\end{theorem*}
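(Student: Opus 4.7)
The plan is to analyse $S(f)$ via its Gauss-sum decomposition. After reducing to $f(0)=0$ and writing $f(x)=\sum_{i\in I} a_i x^i$ with $I\subseteq\{1,\dots,d\}$ the support, Fourier inversion of $\psi$ against the multiplicative characters of $k^\ast=\F_q^\ast$ yields
\[
S(f) - 1 = \frac{1}{(q-1)^{|I|-1}} \sum_{\substack{(t_i)\in\{0,\dots,q-2\}^I \\ \sum_{i\in I} i\,t_i \equiv 0 \pmod{q-1}}} \prod_{i\in I} g(\omega^{-t_i})\, \omega^{t_i}(a_i),
\]
where $\omega$ is the Teichmüller character of $k^\ast$. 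By Stickelberger's theorem, $v_q(g(\omega^{-t})) = s_p(t)/(m(p-1))$ (with $q=p^m$ and $s_p$ the base-$p$ digit sum), while the prefactor $(q-1)^{|I|-1}$ is a $p$-adic unit. Each of the three claimed bounds then reduces to the following uniform (in $q$ and in $I$) combinatorial statement: the minimum of $\sum_{i\in I} s_p(t_i)$ over nontrivial $(t_i)$ satisfying the \emph{Stickelberger congruence} $\sum_{i\in I} i\,t_i \equiv 0 \pmod{q-1}$ is at least $m(p-1)\cdot B(d,p)$, where $B(d,p)$ is the bound stated in each case.

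The heart of the proof is this digit-sum minimisation, handled separately in each range. For $p^n-1\leq d\leq p^{n+1}-p-1$, every exponent $i$ has base-$p$ length at most $n+1$; analysing how the digits of $i\,t_i$ behave modulo $p^m-1$ (equivalently, cyclically over $m$ positions) forces $\sum s_p(t_i)\geq \lceil m/n\rceil$, yielding the bound $1/(n(p-1))$. For $d\leq p-2$ with $p\geq 5$, the minimal one-term solution of $i\,t\equiv 0\pmod{q-1}$ with $i\leq p-2$ is $t=2(q-1)/(p-1)$, whose base-$p$ expansion is $\underbrace{2\cdots 2}_{m}$, of digit sum $2m$; after ruling out cheaper multi-term configurations one obtains $2/(p-1)$. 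The middle range $p^{n+1}-p+1\leq d\leq p^{n+1}-2$ is the subtlest: the degree constraint excludes the cheap one-term solutions available in the first range, and a careful two-term analysis produces the unusual denominator $2n+1$. Tightness is verified by explicit polynomials, the prototype being $f(x)=x^{p^n-1}$ on $k=\F_{p^n}$: since $f\equiv 1$ on $\F_{p^n}^\ast$, we have $S(f)=1+(p^n-1)\psi(1)\equiv 1-\psi(1)\pmod{p^n}$, whence $v_q(S(f))=v_p(1-\zeta_p)/n=1/(n(p-1))$; variants over $\F_{p^m}$ with $m$ tuned to the fractional denominators realise the other bounds.

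The principal obstacle is the combinatorial lower bound in the middle range, where the unusual denominator $2n+1$ must be shown to be optimal: one must verify, uniformly in $q=p^m$ and across all supports $I\ni d$, that no configuration $(t_i)$ with $\sum s_p(t_i)<m(2n+1)/2$ can satisfy the Stickelberger congruence. The difficulty is that $d\in\{p^{n+1}-p+1,\dots,p^{n+1}-2\}$ sits very close to $p^{n+1}$ but just short of the round value, and the interaction of its base-$p$ expansion with the cyclic-digit structure of multiplication modulo $p^m-1$ requires a delicate case analysis distinguishing according to $d\bmod p$.
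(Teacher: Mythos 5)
Your opening reduction is, in substance, the same one the paper uses: decomposing $S(f)$ into Gauss sums and invoking Stickelberger is exactly the content of the Moreno--Kumar--Castro--Shum bound the paper quotes, namely $v_q(S(f))\geq \sigma_{D,p}(m)/(m(p-1))$ with $\sigma_{D,p}(m)$ the minimal total digit sum of a nontrivial solution of $\sum_i i\,t_i\equiv 0 \pmod{q-1}$. The genuine gap is that everything after this reduction --- which is the actual theorem --- is asserted rather than proved. The three digit-sum lower bounds occupy all of Sections 1 and 2 of the paper: one introduces the support map $k\mapsto \frac{1}{p^\ell-1}\sum_D d\,\delta^k(u_d)$, bounds its values in terms of the maximal jump (Lemma \ref{max1}), bounds $|\varphi_U|$ from below for irreducible supports (Proposition \ref{bou}), and only then can one argue. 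For the range $p^{n+1}-p+1\leq d\leq p^{n+1}-2$ your ``careful two-term analysis'' is not enough even in principle: a priori a cheaper configuration could have any weight, and the paper must first force $w<6$ and then eliminate $w=3,4,5$ case by case (Proposition \ref{cas1}); note also an arithmetic slip --- the threshold to exclude is $\sum s_p(t_i)<2m/(2n+1)$, not $m(2n+1)/2$. For $p^{n}-1\leq d\leq p^{n+1}-p-1$ your one-line claim that the cyclic digit structure ``forces $\sum s_p(t_i)\geq\lceil m/n\rceil$'' is precisely the hard statement: the trivial bound via $\max_{i\le d} s_p(i)$ is strictly weaker, and the paper needs $w<2p-1$, Lemmas \ref{case1}--\ref{case3} on the lengths of geometric subsequences of the support, plus a separate argument for $n=1$ (Proposition \ref{densp2}), where new types of minimal solutions genuinely appear. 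As it stands, your proposal restates the combinatorial problem and names its hardest case as an obstacle, but does not solve it.

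On tightness: your prototype $f(x)=x^{p^n-1}$ over $\F_{p^n}$ is fine for $d=p^n-1$ once you replace it by $cx^{p^n-1}$ with $\psi(c)\neq 1$ (your $\psi(1)$ can equal $1$, e.g.\ when the absolute trace of the element defining $\psi$ vanishes), but the theorem asks for a degree-$d$ polynomial attaining the bound for \emph{every} $d$ in each range, and ``variants with $m$ tuned'' does not supply these. The paper gets this part cheaply but differently: the $p$-density is attained at some finite length $\ell$ (a minimum, not just an infimum), and the equality case of the Moreno et al.\ bound over $\F_{p^\ell}$ then produces a polynomial realizing it; alternatively one can read off explicit witnesses from the minimal irreducible solutions listed in Propositions \ref{cas1}, \ref{minirr1} and \ref{minirr2}.
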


Let us consider Artin-Schreier curves, i.e. $p$-cyclic coverings of the projective line in characteristic $p$. In \cite{pz}, the authors study a moduli space $\AS_g$ for genus $g$ such curves, and its $p$-rank stratification by strata $\AS_{g,s}$, $0\leq s\leq g$. In particular, they show that the image of $\AS_g$ under the Torelli morphism is not in general position with respect to the $p$-rank stratification of the space of principally polarized abelian varieties. In this paper, we shall concentrate on the $p$-rank $0$ stratum $\AS_{g,0}$, and on the finer stratification by the Newton polygons of (the numerator of the zeta function of) $p$-rank $0$ Artin-Schreier curves.

From Grothendieck's specialization theorem, these Newton polygons have a lower bound, the \emph{generic Newton polygon}. From \cite{dens}, the above lower bounds on the valuations can be reinterpreted as the first slopes of these generic polygons. We go further and determine their first vertices, with the help of the congruence given in \cite{cong}. We also give the space of curves whose Newton polygon share this vertex; in the space of degree $d$ polynomials parametrized by their coefficients, it is the Zariski open subset given by the non vanishing of some polynomial, the \emph{Hasse polynomial}. This polynomial was determined in the case $p>2d$ in \cite[Theorem 1.1]{sz2} as $\left\{f^{\lceil \frac{p-1}{d}\rceil}\right\}_{p-1}$, the degree $p-1$ coefficient of $f^{\lceil \frac{p-1}{d}\rceil}$. As a consequence, we give a complete answer to this question.

Recall that a genus $g$ \emph{supersingular} curve is a curve whose Newton polygon is the highest possible, i.e. the segment between $(0,0)$ and $(2g,g)$. Many results are available on supersingular Artin-Schreier curves in characteristic $2$; note in this case Artin-Schreier curves are hyperelliptic curves. In \cite{vv}, the authors construct large families of supersingular such curves, then they use them in \cite{vv2} to show that there exists supersingular curves of any genus in characteristic $2$. In \cite{sz}, Scholten and Zhu show a result in the other direction; when $g$ has the form $2^n-1$, $n\geq 2$, there does not exist any supersingular Artin-Schreier curve of genus $g$. As a consequence of our results, we are able to show the following generalization of this last result to odd characteristic

\begin{theorem*}
\label{supsing}
Let $p$ denote an odd prime, and $d=i(p^n-1)$, where $n\geq 1$ and $1\leq i\leq p-1$ are integers such that $n(p-1)>2$. Then there is no supersingular Artin-Schreier curve in characteristic $p$ having genus $g=\frac{1}{2}(p-1)(d-1)$.
\end{theorem*}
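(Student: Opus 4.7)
The plan is to argue by contradiction, assuming $C : y^p - y = f(x)$ is supersingular of genus $g = \frac{1}{2}(p-1)(d-1)$ with $\deg f = d = i(p^n - 1)$. First, I would use the standard factorization of the zeta numerator of $C$ as $\prod_{a=1}^{p-1} L(T, af, \psi)$, so that supersingularity is equivalent to the assertion that every one of these twisted $L$-functions has Newton polygon equal to the single segment from $(0,0)$ to $(d-1, (d-1)/2)$, i.e.\ every slope is exactly $1/2$.

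Next, I would apply the bounds theorem. The degree $d = i(p^n - 1)$ with $1 \le i \le p - 1$ satisfies $p^n - 1 \le d \le (p-1)(p^n-1) \le p^{n+1} - p - 1$, so the first case applies and yields $v_q(S(af)) \ge \frac{1}{n(p-1)}$ for every $a \in \F_p^\times$. By the interpretation of \cite{dens} recalled in the introduction, $\frac{1}{n(p-1)}$ is the first slope of the \emph{generic} Newton polygon of the family of degree-$d$ $L$-functions. The hypothesis $n(p-1) > 2$ gives the strict inequality $\frac{1}{n(p-1)} < \frac{1}{2}$, so supersingularity strictly raises the first slope above the generic one; by the paper's explicit determination of the generic first vertex $(a_0, h_0)$ and of the associated Hasse polynomial $H$, this forces $H(af) = 0$ for every $a \in \F_p^\times$.

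To conclude, I would exploit the congruence from \cite{cong} beyond its leading term. Supersingularity imposes $v_q(a_j) \ge j/2$ on every coefficient $a_j$ of each $L(T, af, \psi)$; in particular $v_q(a_{a_0}) \ge a_0/2 > h_0$ by the strict gap. The congruence expresses each early coefficient $a_j$ modulo the next power of $q$ as an explicit polynomial in the coefficients of $f$, so supersingularity translates into the simultaneous vanishing of a cascade of Hasse-type polynomials on the orbit $\{af : a \in \F_p^\times\}$. The main obstacle, and where the arithmetic form $d = i(p^n-1)$ is decisive, is to show that this overdetermined system has no solution: the rigid base-$p$ digit pattern of $d$, which realises the minimum $p$-weight $\sigma_p(d) = n(p-1)$, pins down the leading terms of the congruence in a way that precludes any simultaneous zero for a polynomial of degree exactly $d$, thereby yielding the desired contradiction.
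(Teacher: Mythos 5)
You set the statement up correctly (factor the zeta numerator into the twisted $L$-functions, note that $d=i(p^n-1)$ falls in the first range of the bounds theorem so the generic first slope is $\frac{1}{n(p-1)}<\frac12$, and observe that supersingularity would force the first slope of every $L(af,T)$ up to $\frac12$). But the proof has a genuine gap exactly at the point you yourself flag as ``the main obstacle'': ruling out the locus where the first slope rises above the generic value. Saying that the base-$p$ digit pattern of $d$ ``pins down the leading terms of the congruence in a way that precludes any simultaneous zero'' is not an argument; nothing in your text shows why a degree-$d$ polynomial could not lie on the vanishing locus of the Hasse polynomial (and of whatever further conditions you impose). Moreover, the mechanism you propose for going further --- exploiting the congruence of \cite{cong} ``beyond its leading term'' to get a cascade of Hasse-type conditions --- is not available: that congruence controls $L(f,T)$ only modulo the ideal $I_\delta$ of series of first slope greater than $\delta$, i.e.\ only along the first slope; it says nothing about higher coefficients, so there is no cascade to exploit.

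The paper's proof closes this gap differently, and more simply: it never needs the Hasse polynomial to be nonzero. Using the classification of minimal irreducible solutions (Propositions \ref{minirr1} and \ref{minirr2}) for $D=\{1,\dots,d\}$ with $d=i(p^n-1)$, one writes down the minimal support and the matrix $\overline{M}(\Gamma)$ explicitly, and observes that the weight-one solution $1\cdot i(p^n-1)$ produces a block of $\overline{M}(\Gamma)$ whose only entry is the leading coefficient $c_d$, while the blocks $M_{ai}$, $M_{ib}$ vanish. Hence the subspace $W$ spanned by the basis vectors $e_{ij}$ is stable under the semilinear map $\varphi$, and $\varphi|_W$ is bijective because $c_d\neq 0$ for any polynomial of degree exactly $d$. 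Therefore $V_{ss}\supseteq W\neq 0$ for \emph{every} such $f$, and Corollary \ref{firstvertex} gives that the first slope of $\NP_q(f)$ is exactly $\delta=\frac{1}{n(p-1)}<\frac12$, contradicting supersingularity. In short, the decisive step is a concrete linear-algebra computation on $\overline{M}(\Gamma)$ isolating the leading coefficient, not the non-vanishing of the full Hasse determinant nor any higher-order congruence; this is the ingredient your proposal is missing.
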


Let us briefly describe the methods employed in this paper. In \cite{mkcs}, Moreno, Kumar, Castro and Shum give a link between the valuations of exponential sums over $k$ and the $p$-weights of the solutions of certain systems of equations modulo $q-1$. 

Let us be more precise (in the one variable case): let $D$ denote a finite set of integers; we consider the modular equation
$$\sum_D du_d\equiv 0 \mod q-1,~ \sum_D du_d>0.$$
If $U=(u_d)_{d\in D}$ is a solution, we denote by $s_p(U):=\sum_D s_p(u_d)$ its $p$-weight, and by $\sigma_{D,p}(m)$ the minimum of the $s_p(U)$ over the solutions. Then the main result in \cite{mkcs} is the following: for any polynomial $f$ having its coefficients in $k$, and its exponents in $D$, we have the lower bound $v_q(S(f))\geq \frac{\sigma_{D,p}(m)}{m(p-1)}$, which is an equality for at least one $f$ as above.

In \cite{dens}, we have shown that the infimum 
$$\inf_{m\geq 1} \left\{ \frac{\sigma_{D,p}(m)}{m(p-1)}\right\}$$
is actually a minimum $\delta_{D,p}$, the \emph{$p$-density} of the set $D$. From this result, we deduce the lower bound $v_q(S(f))\geq \delta_{D,p}$ depending only on the set $D$ and the characteristic $p$. The main goal of this paper is to determine the $p$-density of any set $D$ of the form $\{1,\ldots,d\}$. 

\begin{definition*}
For any integer $d$ prime to $p$, we denote by $\delta(p,d)$ the $p$-density $\delta_{D,p}$ of the set $D=\{1,\ldots,d\}$.
\end{definition*}

In this setting, Theorem \ref{bounds} above can be rewritten

$$\delta(p,d)=\left\{
\begin{array}{rcl}
 \frac{1}{n(p-1)} & \textrm{if} & p^n-1\leq d \leq p^{n+1}-p-1\\
 \frac{2}{(2n+1)(p-1)} & \textrm{if} & p^{n+1}-p+1\leq d \leq p^{n+1}-2, \\
 \frac{2}{p-1} & \textrm{if} & \frac{p-1}{2}\leq d \leq p-2,~p\geq 5 \\
\end{array}
\right.$$

Actually the main part of this paper is devoted to show these equalities: they result from Propositions  \ref{cas1}, \ref{wgeom} and \ref{densp2}. Note that the tightness of Moreno et al's bound joint with the definition of the density as a minimum are sufficient to show the last assertion of Theorem \ref{bounds}.

\begin{remark*}
Let us come back to the lower bounds $\sigma_{D,p}(m)$ depending on the field $k$. It follows from the above definitions that we have the inequalities $\sigma_{D,p}(m)\geq \lceil m(p-1)\delta_{D,p}\rceil$ and $v_q(S(f))\geq \frac{1}{m(p-1)}\lceil m(p-1)\delta_{D,p}\rceil$, the last one being valid for any degree $d$ polynomial over $k$. 

One can show that the first one is indeed an equality by constructing explicitely a solution having $p$-weight $\lceil m(p-1)\delta_{D,p}\rceil$. As a consequence there always exists a degree $d$ polynomial such that the second is also an equality, and we have obtained a tight lower bound for the valuations of exponential sums associated to a fixed degree polynomial over a fixed finite field.
\end{remark*}

In \cite{dens}, we reinterpret the density as the first slope of the generic Newton polygon. But in order to conclude about the non existence of supersingular curves and the first vertex, we have to be more precise. We use the main result from \cite{cong}; it gives an explicit congruence for the $L$-function (and as a consequence for the numerator of the zeta function) ``along its first slope'' in terms of certain invariants associated to $D$ and $p$. Since we determine these invariants in the course of giving the density, we can write the congruence explicitely, from which we deduce Theorem \ref{supsing}, the first vertices and their Hasse polynomial.

\medskip

The paper is organized as follows: in section \ref{sec1}, we recall some properties of the solutions of the modular equation; we pay special attention to their supports, in order to give upper bounds for the lengths of minimal irreducible solutions. Once this has been done, we show Theorem \ref{bounds} in the rather technical, but completely elementary Section \ref{sec3}. Finally, we give the consequences of the results in section \ref{sec3} for Artin-Schreier curves in the last section: we first prove Theorem \ref{supsing} in subsection \ref{prove}, then we dedicate subsection \ref{hasse} to the determination of the first vertex and Hasse polynomial of the generic Newton polygons.

\section{Bounds for the support} 
\label{sec1}

In this section, we recall the modular equation from \cite{mkcs}, and some objects and results from \cite{dens}; then we define support maps and give bounds that will be useful in the study of minimal irreducible solutions of the modular equation in the next sections.

\subsection{Solutions of the modular equation}

In the following, we fix a finite set $D\subset \N_{>0}$ and a prime $p$. For any $\ell\geq 1$, we define the finite set 
$E_{D,p}(\ell)\subset \{0,\ldots,p^\ell-1\}^{|D|}$ as the set of solutions $U=(u_d)_{d\in D}$ of the following system
$$\left\{\begin{array}{rcl}
\sum_D du_d & \equiv & 0 \mod p^\ell-1 \\
\sum_D du_d & > & 0 \\
\end{array}
\right.$$

We denote by $s_p(n)$ the $p$-weight of the integer $n$, i.e. the sum of its base $p$ digits. We define the \emph{weight} of a solution as 
$s_p(U):=\sum_D s_p(u_d)$, its \emph{length} as $\ell(U):=\ell$, and its \emph{density} as $\delta(U):=\frac{s_p(U)}{(p-1)\ell(U)}$.

We set $\sigma_{D,p}(\ell):=\min\{s_p(U),~U\in E_{D,p}(\ell)\}$. In \cite{dens}, we have shown that the infimum
$$\inf_{\ell\geq 1} \left\{ \frac{\sigma_{D,p}(\ell)}{\ell(p-1)}\right\}$$
is actually a minimum $\delta_{D,p}$, the \emph{$p$-density} of the set $D$.

\begin{definition}
A solution $U\in E_{D,p}(\ell)$ is \emph{minimal} when we have $\delta(U)=\delta_{D,p}$.
\end{definition}

We define the \emph{shift} as the map $\delta$ from $\{0,\ldots,p^\ell-1\}$ to itself sending $p^\ell-1$ to itself, and any other $i$ to the 
remainder of $pi$ modulo $p^\ell-1$ (note that this map shifts the base $p$ digits, and its inverse is sometimes called the \emph{Dwork map}). We extend it coordinatewise to the set 
$\{0,\ldots,p^\ell-1\}^{|D|}$; then it leaves the subset $E_{D,p}(\ell)$ stable. As a consequence, all integers $\sum_D d\delta^k(u_d)$, $0\leq k\leq \ell-1$, are positive multiples of $p^\ell-1$. 

\begin{definition}
The \emph{support} of the solution $U$ is the map $\varphi_U$ from $\Z/\ell\Z$ to $\N_{>0}$ 
defined by 
$$\varphi_U(k):=\frac{1}{p^\ell -1} \sum_D d\delta^k(u_d)$$
A solution $U$ is \emph{irreducible} when the map $\varphi_U$ is an injection.
\end{definition}

For any $d\in D$ we write the base $p$ expansion $u_d=\sum_{r=0}^{\ell-1} p^ru_{dr}$; note that we have $s_p(U)=\sum_D\sum_{r=0}^{\ell-1}u_{dr}$. Recall from \cite[Lemma 1.2 (ii)]{dens} that for any 
$0\leq r\leq \ell-1$, we have the equalities

\begin{equation}
\label{saut}
\sum_D du_{dr}= p\varphi_U(\ell-r-1)-\varphi_U(\ell-r)
\end{equation}

For any $0\leq r\leq \ell-1$, we define the \emph{$r$-th weight} of the solution $U$ as the integer $w_r:=\sum_{D} u_{dr}$. We list below two easy consequences of these definitions for further use

\begin{lemma}
\label{jump}
Let $U\in E_{D,p}(\ell)$ denote a solution of the above system, with weight $w$ and support $\varphi_U$

\begin{itemize}
	\item[(i)] we have $w=\sum_{r=0}^{\ell-1} w_r$;
	\item[(ii)] for any $0\leq r\leq \ell-1$, we have $p\varphi_U(\ell-r-1)-\varphi_U(\ell-r)\leq w_r\max D$.
\end{itemize}

\end{lemma}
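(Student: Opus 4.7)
Both statements are essentially bookkeeping from the definitions just introduced, so I would dispatch them quickly.

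For part (i), the plan is to unfold definitions and swap the order of summation. By definition, $w = s_p(U) = \sum_{d\in D} s_p(u_d)$, and the base-$p$ expansion $u_d = \sum_{r=0}^{\ell-1} p^r u_{dr}$ together with the fact that each $u_{dr} \in \{0,\ldots,p-1\}$ is precisely the base-$p$ digit of $u_d$ gives $s_p(u_d) = \sum_{r=0}^{\ell-1} u_{dr}$. Exchanging the two summations then yields $w = \sum_{r=0}^{\ell-1} \sum_{d\in D} u_{dr} = \sum_{r=0}^{\ell-1} w_r$.

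For part (ii), the idea is to invoke the identity \eqref{saut} recalled just above the statement,
\[
\sum_{d\in D} d\, u_{dr} \;=\; p\varphi_U(\ell-r-1) - \varphi_U(\ell-r),
\]
and bound the left-hand side coefficientwise. Since $u_{dr}\geq 0$ and $d\leq \max D$ for every $d\in D$, we get
\[
\sum_{d\in D} d\, u_{dr} \;\leq\; (\max D)\sum_{d\in D} u_{dr} \;=\; w_r\,\max D,
\]
which together with the equality above gives the claimed inequality.

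There is no real obstacle here: the content has been pre-packaged by equation \eqref{saut} (proved in \cite{dens}) and by the definition of $w_r$. The only minor point to be careful about is the index convention in \eqref{saut} (so that the shift $\delta^k$ is correctly matched with the digit index $r$), but this is just transcription.
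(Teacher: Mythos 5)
Your proof is correct and is exactly the argument the paper has in mind: the paper states Lemma \ref{jump} without proof as an immediate consequence of the definitions and of equation (\ref{saut}), and your unfolding of $s_p(U)=\sum_D\sum_r u_{dr}$ plus the coefficientwise bound $d\leq \max D$ is the intended verification.
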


\subsection{Support maps}

From the above results, the supports of irreducible solutions $U$ share many common features: they are periodic, and consist of some geometric sequences of common ratio $p$. From these constraints, we now define, and study, a certain type of maps

\begin{definition}
Let $\ell\geq s$ denote two integers, and $\varphi : \Z/\ell\Z\rightarrow \N_{>0}$ any map 

\begin{itemize}
	\item[(i)] We say that $\varphi$ is a \emph{support map of length $\ell$ with $s$ jumps} if we have $\varphi(i+1)=p\varphi(i)$ except for exactly 
$s$ pairwise distinct values $i_1,\ldots,i_s\in \Z/\ell\Z$, for which we have $\varphi(i+1)<p\varphi(i)$.
\item[(ii)] We call the $s$ positive integers $j_t:=p\varphi(i_t)-\varphi(i_t+1)$ the \emph{jumps} of $\varphi$.
\item[(iii)] Moreover, we say that $\varphi$ is \emph{irreducible} when $\varphi$ is an injection.
\end{itemize}

\end{definition}

We begin with a technical result about such a support map.

\begin{lemma}
\label{max1}
Let $\varphi$ denote a support map of length $\ell$; assume that its maximal jump is at most $M$. Then for any $i\in \Z/\ell\Z$, 
we have the inequality
$$\varphi(i)\leq \frac{M}{p-1}$$
\end{lemma}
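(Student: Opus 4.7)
The plan is to iterate the defining relation of a support map once around the full cycle $\Z/\ell\Z$, use periodicity to turn this into a closed formula for $\varphi(i)$, and then bound the resulting sum by $M$ times a geometric series.

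More precisely, I would first encode the definition uniformly as $\varphi(i+1)=p\varphi(i)-j(i)$, where $j(i)=0$ unless $i$ is one of the jump positions $i_1,\dots,i_s$, in which case $j(i_t)=j_t\in(0,M]$. A trivial induction on $k\geq 0$ then yields
\[
\varphi(i+k)=p^{k}\varphi(i)-\sum_{m=0}^{k-1}p^{k-1-m}\,j(i+m).
\]
Setting $k=\ell$ and using $\varphi(i+\ell)=\varphi(i)$ (i.e.\ the cyclic indexing on $\Z/\ell\Z$), I obtain the identity
\[
(p^{\ell}-1)\,\varphi(i)=\sum_{m=0}^{\ell-1}p^{\ell-1-m}\,j(i+m).
\]

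Since $0\leq j(i+m)\leq M$ for every $m$ (whether or not $i+m$ is a jump position), the right-hand side is bounded above by
\[
M\sum_{m=0}^{\ell-1}p^{\ell-1-m}=M\cdot\frac{p^{\ell}-1}{p-1}.
\]
Dividing by $p^{\ell}-1$ gives $\varphi(i)\leq M/(p-1)$, as required.

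There is no real obstacle: the argument is a direct discrete-dynamics telescoping and the geometric-series estimate is essentially sharp (equality would correspond to a jump of size $M$ at every step, which is allowed in the abstract bound even though it would not arise from a genuine solution with only $s$ jumps). The only thing to watch is the bookkeeping of the exponents and the use of $\varphi(i+\ell)=\varphi(i)$ coming from the indexing on $\Z/\ell\Z$.
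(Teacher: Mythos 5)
Your proof is correct. Where the paper argues by contradiction — assuming $\varphi(i_0)>\frac{M}{p-1}$, using $\varphi(i+1)\geq p\varphi(i)-M$ to show the values then increase strictly at every step, and contradicting periodicity after one full turn around $\Z/\ell\Z$ — you instead iterate the exact recursion $\varphi(i+1)=p\varphi(i)-j(i)$ once around the cycle and obtain the closed identity $(p^{\ell}-1)\varphi(i)=\sum_{m=0}^{\ell-1}p^{\ell-1-m}j(i+m)$, from which the bound follows by the geometric series estimate. Both arguments rest on the same two ingredients (the recursion with bounded deficit and the periodicity of $\varphi$), so the difference is one of execution rather than of underlying idea; still, your version is slightly more informative, since the identity pins down $\varphi(i)$ exactly as the base-$p$ value of the cyclic word of jumps (in the spirit of the relation (1.1) between a solution's digits and its support) and makes the equality case transparent, whereas the paper's monotonicity argument is shorter and needs only the inequality $p\varphi(i)-\varphi(i+1)\leq M$. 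Your sharpness remark is fine but tangential: equality forces a jump of size exactly $M$ at every position, which the abstract definition permits (e.g.\ $\varphi\equiv 1$, $M=p-1$), though it is irrelevant to the lemma itself.
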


\begin{proof}
From the definition of the jumps, we have the inequality $p\varphi(i)-\varphi(i+1)\leq M$ for any $i\in \Z/\ell\Z$. Assume we 
have $\varphi(i_0)> \frac{M}{p-1}$ for some $i_0\in \Z/\ell\Z$; we get $\varphi(i_0+1)\geq p\varphi(i_0)-M>\varphi(i_0)$. Continuing this process, 
we get $\varphi(i_0+\ell)>\varphi(i_0)$, which contradicts the definition of the map $\varphi$.
\end{proof}

Our next problem is to give a lower bound for the sum $|\varphi|:=\sum_i \varphi(i)$, where $\varphi$ is an irreducible support map of length $\ell$ with $s$ jumps.

As a consequence of the definition, we can write
$$\Ima \varphi=\{n_1p^{u_{1}},\ldots,n_1p^{u_{1}+\ell_{1}-1},\ldots,n_sp^{u_{s}},\ldots,n_sp^{u_{s}+\ell_{s}-1}\}$$
where $\sum \ell_i=\ell$, the $\ell$ integers above are pairwise distinct and $(n_i,p)=1$ for all $1\leq i\leq s$.

In order to minimize $|\varphi|$, we can assume $u_1=\ldots=u_s=0$; up to reordering if necessary, we also assume $n_1<\ldots<n_s$. 
We begin with a lemma.

\begin{lemma}
\label{min1}
Let $n_1<\ldots<n_s$ be prime to $p$ integers; denote by $\{b_1,\ldots,b_s\}$ the (ordered) set 
$E_s:=\{1\leq i\leq s+\lceil\frac{s}{p-1}-1\rceil,~(i,p)=1\}$. Then we have $n_i\geq b_i$ for any $1\leq i\leq s$.
\end{lemma}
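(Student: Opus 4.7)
The plan is to first show that $E_s$ is precisely the set of the $s$ smallest positive integers coprime to $p$; once this is established, the desired inequality follows by a straightforward induction.

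For the first step, I would write $s = k(p-1) + r$ with a unique integer $k \ge 0$ and $1 \le r \le p-1$. A direct check shows that this $k$ equals $\lceil s/(p-1) - 1 \rceil$, so the upper bound in the definition of $E_s$ is $N := s + k = kp + r$. Since $1 \le r \le p-1$, we have $p \nmid N$, and
$$\#\{1 \le i \le N : (i,p)=1\} = N - \lfloor N/p\rfloor = kp+r-k = k(p-1)+r = s.$$
Hence $E_s$ has exactly $s$ elements, and the ordered enumeration $b_1 < \cdots < b_s$ gives the $s$ smallest positive integers coprime to $p$.

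The second step is an induction on $i$. For $i=1$, the integer $n_1$ is positive and coprime to $p$, so $n_1 \ge 1 = b_1$. Assume $n_{i-1} \ge b_{i-1}$. Because the $n_j$ are strictly increasing, $n_i > n_{i-1} \ge b_{i-1}$; but by construction $b_i$ is the smallest integer coprime to $p$ that is strictly larger than $b_{i-1}$, and $n_i$ is itself coprime to $p$ with $n_i > b_{i-1}$. Therefore $n_i \ge b_i$, completing the induction.

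There is no substantial obstacle here: the only delicate point is the arithmetic identification of $N$ with $kp+r$, which is needed to ensure that the somewhat opaque quantity $s + \lceil s/(p-1) - 1 \rceil$ indeed defines a set of cardinality $s$ containing exactly the $s$ smallest integers prime to $p$. Once this combinatorial description of $E_s$ is in hand, the inequality $n_i \ge b_i$ is just the observation that any strictly increasing sequence of integers satisfying a fixed congruence condition is bounded below, term by term, by the initial segment of such integers.
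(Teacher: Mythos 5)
Your proof is correct and follows essentially the same route as the paper: both arguments reduce to the Euclidean-division bookkeeping showing that $E_s$ has exactly $s$ elements, hence is the set of the $s$ smallest prime-to-$p$ integers, after which the termwise bound $n_i\geq b_i$ is immediate from the strict monotonicity and coprimality of the $n_i$. The only cosmetic difference is that you verify the count directly at $N=s+\lceil\frac{s}{p-1}-1\rceil$, whereas the paper solves $s=n-\lfloor\frac{n}{p}\rfloor$ for $n$; these are the same computation.
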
 

\begin{proof}
We just have to show that $E_s$ contains exactly $s$ elements: since these are the first $s$ prime to $p$ integers, the Lemma follows 
immediately from the assumptions on the $n_i$. 

For any positive integer $n$, the set $\{1,\ldots,n\}$ contains exactly $n-\lfloor\frac{n}{p}\rfloor$ prime to $p$ integers; note that 
we can assume $n$ coprime to $p$ since in the opposite case, the sets $\{1,\ldots,n\}$ and $\{1,\ldots,n-1\}$ contain the same number of 
prime to $p$ integers. 

We solve the equation $s=n-\lfloor\frac{n}{p}\rfloor$. If we write the Euclidean division of $n$ by $p$, $n=qp+r$ with $0<r\leq p-1$, we 
get $n-\lfloor\frac{n}{p}\rfloor=(p-1)q+r$. We consider two cases
\begin{itemize}
	\item if we have $1\leq r<p-1$, then $s=(p-1)q+r$ is exactly the Euclidean division of $s$ by $p-1$, and we have 
	$n=s+q=s+\lfloor\frac{s}{p-1}\rfloor$;
	\item else we have $r=p-1$, $s=(p-1)(q+1)$, and $n=p(q+1)-1=s+\lfloor\frac{s}{p-1}\rfloor-1$.
\end{itemize}
The result follows from these equalities.
\end{proof}

We now consider the sequence $(c_i)_{i\geq 1}$ which is defined by ordering the elements in the set 
$$F_s=\bigcup_{b\in E_s} \left\{p^jb,~j\geq 0\right\}$$

\begin{lemma}
\label{min2}
Let $(c_i)_{i\geq 1}$ be as above; then we have
\begin{itemize}
	\item $c_i=i$ for any $1\leq i\leq s+\lceil\frac{s}{p-1}-1\rceil$;
	\item $c_i=pc_{i-s}$ for any $i\geq s+\lceil\frac{s}{p-1}\rceil$.
\end{itemize}
\end{lemma}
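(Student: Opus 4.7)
The plan is to exploit the self-similar structure of $F_s$. Every positive integer $n$ has a unique decomposition $n = p^j b$ with $(b,p)=1$ and $j\geq 0$, and by the very definition of $F_s$, we have $n\in F_s$ if and only if $b\in E_s$. Separating the elements of $F_s$ according to whether they are prime to $p$ or not yields the disjoint decomposition
\[ F_s = E_s \sqcup p\,F_s, \]
since the prime-to-$p$ elements of $F_s$ are exactly $E_s$, and the remaining elements are of the form $pn$ with $n\in F_s$. This identity is what drives both statements.

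For the first bullet, set $N:=s+\lceil\frac{s}{p-1}-1\rceil$, so $E_s=\{1\leq i\leq N,~(i,p)=1\}$. I will show $\{1,\ldots,N\}\subset F_s$: given $n\leq N$, its prime-to-$p$ part $b$ satisfies $b\leq n\leq N$ and $(b,p)=1$, hence $b\in E_s$, so $n\in F_s$. Since $F_s$ is listed in increasing order and already contains every integer from $1$ to $N$, its $N$ smallest elements must be precisely $1,2,\ldots,N$, giving $c_i=i$ for $1\leq i\leq N$.

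For the second bullet, recall that $E_s\subset\{1,\ldots,N\}$ and that the elements of $F_s$ prime to $p$ are exactly $E_s$. For $i\geq N+1=s+\lceil\frac{s}{p-1}\rceil$ we have $c_i>N$, so $c_i\notin E_s$; combined with the decomposition $F_s=E_s\sqcup pF_s$, this forces $c_i\in pF_s$, i.e. $c_i=pm$ for a unique $m=c_k\in F_s$. To identify $k$, I count the elements of $F_s$ that are $\leq c_i$. There are $i$ in total; exactly $s$ of them lie in $E_s$ (all of $E_s$, since $E_s\subset\{1,\ldots,N\}\subset\{1,\ldots,c_i\}$); and the remaining $i-s$ are the multiples $pc_1<pc_2<\cdots<pc_k=c_i$ coming from $pF_s$. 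Hence $k=i-s$, so $c_i=pc_{i-s}$.

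There is no real obstacle: once the decomposition $F_s=E_s\sqcup pF_s$ is written down, both assertions follow by a counting argument. The only point requiring a hint of care is confirming the two algebraic identities $N=s+\lceil\frac{s}{p-1}-1\rceil$ and $N+1=s+\lceil\frac{s}{p-1}\rceil$, which I will verify by treating separately the cases where $p-1$ divides $s$ and where it does not, ensuring that the cutoff $N+1$ in the recursion coincides exactly with the first index for which $c_i>N$.
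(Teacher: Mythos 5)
Your proof is correct. The first bullet is handled exactly as in the paper: every $n\leq s+\lceil\frac{s}{p-1}-1\rceil$ has prime-to-$p$ part in $E_s$ (by the definition of $E_s$, together with the fact from Lemma \ref{min1} that $|E_s|=s$), so all such integers lie in $F_s$ and the enumeration starts $1,2,\ldots$. For the second bullet, however, you take a genuinely different route. The paper proceeds by induction: it sets $t=\lceil\frac{s}{p-1}\rceil$, checks the base case $c_{t+s}=pc_t$ by identifying $c_{t+s}$ as the least multiple of $p$ that is at least $t+s$, and then propagates the identity $c_{t+n+s}=pc_{t+n}$ using the interlacing $pc_{t+n}<c_{t+n+s+1}\leq pc_{t+n+1}$ together with the observation that terms beyond $b_s$ must be divisible by $p$. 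You instead isolate the self-similar decomposition $F_s=E_s\sqcup pF_s$ and then identify the index $k$ with $c_i=pc_k$ by counting: among the $i$ elements of $F_s$ at most $c_i$, exactly $s$ lie in $E_s$ (all of $E_s$, since $\max E_s\leq N<c_i$) and the rest are $pc_1<\cdots<pc_k=c_i$, forcing $k=i-s$. This replaces the paper's induction and base-case computation by a single global counting argument; it is cleaner and makes the recursion $c_i=pc_{i-s}$ transparent, while the paper's version gives slightly more local information (e.g.\ the explicit value $c_{t+s}=p\lceil\frac{s}{p-1}\rceil$) along the way. The only caveat in your write-up, the identity $N+1=s+\lceil\frac{s}{p-1}\rceil$, is immediate from $\lceil x-1\rceil=\lceil x\rceil-1$ and needs no case analysis.
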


\begin{proof}
First consider any $i\leq s+\lceil\frac{s}{p-1}-1\rceil$; then we can write $i=i_0p^{j_0}$ for some prime to $p$ integer $i_0$ with 
$i_0\leq i\leq s+\lceil\frac{s}{p-1}-1\rceil$. From Lemma \ref{min1} above, $i_0$ is in $E_s$, and $i$ in $F_s$ from the definition of this last set; thus the first assertion is true.

Now assume $i\geq s+\lceil\frac{s}{p-1}\rceil$; first note that $c_i\geq i\geq s+\lceil\frac{s}{p-1}\rceil>b_s$, and $c_i$ can be written 
$c_i=pc_k$. We set $t:=\lceil\frac{s}{p-1}\rceil$; we will show inductively that the equality $c_{t+n+s}=pc_{t+n}$ is true for any $n$. 
First note that $c_{t+s}$ is a multiple of $p$ from above, and that it must be the least multiple of $p$ greater than or equal to 
$\lceil\frac{s}{p-1}\rceil+s=t+s$. Thus we must have $c_{t+s}=p  \lceil\frac{s}{p-1}\rceil=pc_t$; we have shown the equality for $n=0$. 
Assume the equality $c_{t+n+s}=pc_{t+n}$ is true for some $n$; note that from our construction, the integer $pc_{n+t+1}$ is an element of 
the sequence $(c_i)$. From this observation and the induction hypothesis, we must have $pc_{t+n}<c_{t+n+s+1}\leq pc_{t+n+1}$. Since 
$c_{t+n+s+1}$ is a multiple of $p$, we have $\frac{1}{p}c_{t+n+s+1}=c_k$ for some $t+n<k\leq t+n+1$. We must have $k=t+n+1$, and this is 
the result
\end{proof}

We now give a lower bound for each element of the sequence $(c_n)$

\begin{lemma}
\label{min3}
Let $n\geq 1$ denote an integer, and set $n=qs+r$, $1\leq r\leq s$; then we have $c_n\geq p^qr$.
\end{lemma}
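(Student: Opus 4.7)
My plan is to prove this by induction on $q$ (or equivalently on $n$), using the two clauses of Lemma \ref{min2} to drive the induction.

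For the base case $q=0$, we have $n=r$ with $1\leq r\leq s$. Since $r\leq s\leq s+\lceil s/(p-1)\rceil-1$, the first clause of Lemma \ref{min2} gives $c_n=n=r=p^0\,r$, as required.

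For the inductive step, assume the inequality is established for all indices smaller than $n=qs+r$ with $q\geq 1$. The strategy is to apply the recursion $c_n=p\,c_{n-s}$ whenever it is available, and to handle the remaining small cases by direct arithmetic. Specifically, if $n\geq s+\lceil s/(p-1)\rceil$, then the second clause of Lemma \ref{min2} yields $c_n=p\,c_{n-s}$, and since $n-s=(q-1)s+r$, the induction hypothesis gives $c_{n-s}\geq p^{q-1}r$, whence $c_n\geq p^q r$. Note that as soon as $q\geq 2$ we have $n\geq 2s+1>s+\lceil s/(p-1)\rceil$, so this recursive case always applies.

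The only remaining situation is $q=1$ with $n=s+r<s+\lceil s/(p-1)\rceil$, that is, $r\leq\lceil s/(p-1)\rceil-1$. Here the first clause of Lemma \ref{min2} gives $c_n=n=s+r$, and we must verify $s+r\geq pr$, i.e.\ $s\geq(p-1)r$. This is the main (and only mildly delicate) step: from $r\leq\lceil s/(p-1)\rceil-1$ one has $(p-1)r\leq(p-1)\lceil s/(p-1)\rceil-(p-1)\leq s$, where the final inequality uses that $(p-1)\lceil s/(p-1)\rceil$ equals $s$ when $(p-1)\mid s$, and is at most $s+(p-2)$ otherwise, so in both cases $(p-1)\lceil s/(p-1)\rceil-(p-1)\leq s$. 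This closes the induction and establishes the lemma. The main obstacle, as just indicated, is bookkeeping this boundary case $q=1$ with small $r$, where the geometric recursion has not yet taken effect and one has to appeal directly to the formula $c_n=n$.
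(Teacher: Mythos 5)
Your proof is correct and follows essentially the same route as the paper: reduce via the recursion $c_n=p\,c_{n-s}$ from Lemma \ref{min2} (the paper phrases this as "it suffices to treat $n\leq s+\lceil\frac{s}{p-1}-1\rceil$", you phrase it as an explicit induction on $q$), and then settle the boundary case $n=s+r$ with $r\leq\lceil\frac{s}{p-1}\rceil-1$ by the same arithmetic fact $s\geq(p-1)r$, which the paper obtains from $(p-1)\lceil\frac{s}{p-1}-1\rceil\leq s$. No changes needed.
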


\begin{proof}
First note that the division above is not the Euclidean one. From the second assertion of the preceding Lemma, it is sufficient to show the 
result for any $1\leq n\leq s+\lceil\frac{s}{p-1}-1\rceil$. The assertion is trivial for any $1\leq n\leq s$ since in this case $c_n=n=r$ and $q=0$. If $n>s$, we write $n=s+i$ for some 
$1\leq i\leq \lceil\frac{s}{p-1}-1\rceil$, we have
$$n=(p-1)\frac{s}{p-1}+i\geq (p-1)\lceil\frac{s}{p-1}-1\rceil+i\geq (p-1)i+i=pi$$
and this is the desired result since in this case we have $q=1$ and $r=i$.
 \end{proof}

We are ready to give a lower bound for the total weights of certain support maps with the help of the results above

\begin{proposition}
\label{bou}
Let $\varphi$ denote an irreducible support map of length $\ell$ with $s$ jumps. Write $\ell=qs+r$ with $1\leq r\leq s$; then we have the 
inequality
$$|\varphi|\geq \sum_{i=1}^\ell c_i \geq \frac{s(s+1)}{2}\frac{p^q-1}{p-1}+\frac{r(r+1)}{2}p^q$$
\end{proposition}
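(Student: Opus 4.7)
The plan is to split the displayed double inequality into two parts and treat each separately, recycling the three preceding lemmas almost verbatim.

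First I would handle $|\varphi|\geq \sum_{i=1}^\ell c_i$. Since $\varphi$ is irreducible, its image is a set of $\ell$ distinct positive integers, which from the paragraph preceding Lemma~\ref{min1} can be written as a disjoint union of $s$ geometric progressions with common ratio $p$, say $\bigcup_{i=1}^s\{n_ip^{u_i+j}:0\leq j\leq\ell_i-1\}$ with $(n_i,p)=1$, $u_i\geq 0$, and $\sum\ell_i=\ell$; injectivity forces the $n_i$ to be pairwise distinct, and I reorder them so that $n_1<\dots<n_s$. Replacing every $u_i$ by $0$ only decreases $|\varphi|$, so it is enough to bound $\sum_{i=1}^s n_i\frac{p^{\ell_i}-1}{p-1}$ from below. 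Lemma~\ref{min1} gives $n_i\geq b_i$, which turns this quantity into a sum of $\ell$ distinct elements of the set $F_s$ introduced before Lemma~\ref{min2}. Since $(c_i)_{i\geq 1}$ lists the elements of $F_s$ in increasing order, any sum of $\ell$ distinct elements of $F_s$ is at least $\sum_{i=1}^\ell c_i$, which is the required bound.

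Next I would handle $\sum_{i=1}^\ell c_i\geq \tfrac{s(s+1)}{2}\tfrac{p^q-1}{p-1}+\tfrac{r(r+1)}{2}p^q$ by invoking Lemma~\ref{min3}: each index $n\in\{1,\dots,\ell\}$ can be written uniquely as $n=Qs+R$ with $1\leq R\leq s$, and then $c_n\geq p^QR$. Grouping the $n$ according to $Q$, and using that $\ell=qs+r$ with $1\leq r\leq s$, I would write
$$\sum_{n=1}^\ell c_n \;=\; \sum_{Q=0}^{q-1}\sum_{R=1}^{s}c_{Qs+R}\;+\;\sum_{R=1}^{r}c_{qs+R}\;\geq\; \sum_{Q=0}^{q-1}p^Q\sum_{R=1}^{s}R \;+\; p^q\sum_{R=1}^{r}R,$$
and then recognise $\sum_{R=1}^{s}R=s(s+1)/2$, $\sum_{R=1}^{r}R=r(r+1)/2$, and $\sum_{Q=0}^{q-1}p^Q=(p^q-1)/(p-1)$ to obtain the target expression.

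The only slightly delicate step, and where I expect to spend the most care, is the justification that the image of an irreducible $\varphi$ really is a set of $\ell$ distinct elements of $F_s$ once we normalise the shifts $u_i$ to zero: one has to check that the normalisation preserves distinctness (it does, because the $n_i$ are pairwise distinct and prime to $p$, so the sequences $\{n_ip^j\}_{j\geq 0}$ are pairwise disjoint) and that it can only decrease $|\varphi|$. Everything else is bookkeeping built directly on Lemmas~\ref{min1}, \ref{min2}, and \ref{min3}.
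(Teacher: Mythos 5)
Your proposal is correct and follows essentially the same route as the paper: normalise the shifts $u_i$ to zero, apply Lemma~\ref{min1} to replace the initial terms $n_i$ by $b_i$, observe that the resulting $\ell$ integers $b_ip^j$ are distinct elements of $F_s$ and hence dominate $\sum_{i=1}^\ell c_i$, and finish with Lemma~\ref{min3} and the arithmetic-series identities. The extra care you flag about disjointness of the progressions $\{n_ip^j\}_{j\geq 0}$ is the same implicit point the paper relies on when it says the $b_ip^j$ are indexed by a subset $K$ of cardinality $\ell$.
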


\begin{proof}
Recall that we have written 
$$\Ima \varphi=\{n_1p^{u_{1}},\ldots,n_1p^{u_{1}+\ell_{1}-1},\ldots,n_sp^{u_{s}},\ldots,n_sp^{u_{s}+\ell_{s}-1}\}$$ 
As a consequence of Lemma \ref{min1}, we have $|\varphi|\geq \sum b_i\sum_{j=0}^{\ell_i-1} p^j$. From the equality $\sum_{i=1}^s \ell_i=\ell$, 
we know that the integers $b_ip^j$ are $\ell$ elements in $F_s$: they are the $c_k$, $k\in K$ for some subset $K\subset \N_{>0}$ with cardinality $\ell$. 
As a consequence, we have $\sum_{i=1}^s b_i\sum_{j=0}^{\ell_i-1} p^j=\sum_K c_k\geq \sum_{i=1}^\ell c_i$. The assertion is now an easy 
consequence of Lemma \ref{min3}
$$|\varphi|\geq \sum_{i=1}^\ell c_i \geq \sum_{u=0}^{q-1}\sum_{v=1}^s c_{su+v}+\sum_{t=1}^r c_{qs+t}\geq \sum_{u=0}^{q-1}\sum_{v=1}^s p^uv+
\sum_{t=1}^r p^qt$$

\end{proof}

We now show that the above bound remains valid for support maps having less than $s$ jumps.

\begin{lemma}
\label{min4}
Let $\varphi$ denote an irreducible support map of length $\ell$ with $t$ jumps, with $t\leq s$. Then the bound in Proposition \ref{bou} 
remains valid.
\end{lemma}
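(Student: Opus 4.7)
My plan is to reduce Lemma \ref{min4} directly to Proposition \ref{bou} invoked with the actual number of jumps $t$, and then to compare the resulting bound with the one asserted for the larger parameter $s$. The key observation is an almost tautological monotonicity: replacing $s$ by a smaller integer $t$ can only shrink the auxiliary sets $E_s$ and $F_s$, so the enumeration $c_n^{(t)}$ dominates $c_n^{(s)}$ pointwise.

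First, I would apply the initial step of the proof of Proposition \ref{bou} verbatim to $\varphi$, but with its actual number of jumps $t$ in place of $s$. Nothing in that step uses any specific value of $s$: one writes $\Ima \varphi$ as a union of $t$ geometric sequences with prime-to-$p$ leading terms $n_1<\ldots<n_t$, applies Lemma \ref{min1} with parameter $t$ to get $n_i\geq b_i^{(t)}$, and concludes
$$|\varphi|\;\geq\;\sum_{i=1}^{\ell}c_i^{(t)},$$
where $c_n^{(t)}$ denotes the sequence $c_n$ constructed from the set $E_t$.

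Second, I would show $c_n^{(s)}\leq c_n^{(t)}$ for every $n\geq 1$. Since $t\leq s$, the upper bound $s+\lceil s/(p-1)-1\rceil$ appearing in the definition of $E_s$ is non-decreasing in $s$, so $E_t\subseteq E_s$, whence $F_t\subseteq F_s$; the $n$-th smallest element of a superset is at most the $n$-th smallest element of the subset. Summing for $1\leq i\leq \ell$ yields $\sum_{i=1}^{\ell}c_i^{(t)}\geq \sum_{i=1}^{\ell}c_i^{(s)}$.

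Third, the second inequality of Proposition \ref{bou}, which for $\ell=qs+r$ with $1\leq r\leq s$ reads
$$\sum_{i=1}^{\ell}c_i^{(s)}\;\geq\;\frac{s(s+1)}{2}\frac{p^q-1}{p-1}+\frac{r(r+1)}{2}p^q,$$
is a purely numerical statement about the sequence $c_n^{(s)}$ that relies only on Lemma \ref{min3} with parameter $s$, so it applies unchanged. Chaining the three inequalities gives Lemma \ref{min4}. The only minor pitfall is keeping careful track of the dependence of $c_n$ on the chosen parameter ($s$ versus $t$); once that notational distinction is made the proof is essentially immediate, and I do not anticipate any substantive obstacle.
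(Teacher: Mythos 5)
Your proof is correct and follows essentially the same route as the paper: bound $|\varphi|$ below by the ordered sums coming from $F_t$, compare that sequence termwise with the one coming from $F_s$, and then invoke the numerical bound of Proposition \ref{bou}. The only difference is in the middle step, where you derive $c_n^{(t)}\geq c_n^{(s)}$ directly from the inclusions $E_t\subseteq E_s$ and $F_t\subseteq F_s$, which is a clean (and if anything simpler) replacement for the paper's reduction to $t=s-1$ and inductive comparison of the two sequences.
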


\begin{proof}
First consider the sequence $(d_k)$ obtained by ordering the set $F_{t}$; we show the inequality $d_k\geq c_k$ for any $k$. It is sufficient 
to prove this inequality for $t=s-1$.

Note that for any $1\leq k\leq s+\lceil\frac{s}{p-1}-1\rceil$ we have $c_k=k$; since the $d_k$ are pairwise distinct, we get the inequality 
in this case. Now for any $k\geq s+\lceil\frac{s}{p-1}\rceil$, we have $d_k=pd_{k-s+1}\geq pc_{k-s+1}=c_{k+1}>c_k$ and we get the result 
inductively.

As in the proof of Proposition \ref{bou}, we have an inequality $|\varphi|\geq \sum_K d_k\geq \sum_{i=1}^\ell d_i\geq \sum_{i=1}^\ell c_i$, 
the last inequality coming from the beginning of the proof.
\end{proof}

%
%
%

\subsection{Properties of the support}

In this section, we make the link between the preceding subsections, and we show a result that will be useful when we determine the density.

\begin{proposition}
\label{gapgeom}
Let $U$ be a solution of the system of modular equations associated to $D$ and $p$, with weight $w$ and length $\ell$. We have the following
\begin{itemize}
	\item[(i)] the support $\varphi_U$ is a support map of length $\ell$, with at most $w$ jumps; moreover it is irreducible if, and only if
	the solution $U$ is;
	\item[(ii)] if the support $\varphi_U$ has $s$ jumps, then we have the following inequality
	$$  \max \varphi_U\leq  \frac{w-s+1}{p-1}\max D $$
	\item[(iii)] if it has exactly $w$ jumps, then all are elements of $D$. Moreover, the solution $U$ is completely determined by its support in 
	this case.
\end{itemize}

\end{proposition}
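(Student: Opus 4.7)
The plan rests on identity \eqref{saut}, $p\varphi_U(\ell-r-1) - \varphi_U(\ell-r) = \sum_D du_{dr}$, valid for all $0 \leq r \leq \ell-1$. I would first observe that its right hand side is a non-negative integer, strictly positive exactly when some $u_{dr} \neq 0$, i.e.\ when the $r$-th weight $w_r = \sum_D u_{dr}$ is at least one. This gives $\varphi_U(k+1) \leq p\varphi_U(k)$ for every $k \in \Z/\ell\Z$, so $\varphi_U$ is a support map; each jump (at $k = \ell-r-1$) corresponds to an index $r$ with $w_r \geq 1$, and since $\sum_r w_r = w$ by Lemma \ref{jump}(i), there are at most $w$ such indices. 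The two notions of irreducibility coincide by definition, both amounting to the injectivity of $\varphi_U$, so (i) is settled.

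For (ii), I would index the $s$ jumps by the set $R = \{r : w_r \geq 1\}$, of cardinality $s$. Lemma \ref{jump}(ii) bounds the jump at position $\ell-r-1$ by $w_r \max D$. The small combinatorial observation driving the bound is that $s$ positive integers summing to $w$ have maximum at most $w - (s-1) = w - s + 1$; hence the largest jump is at most $(w-s+1)\max D$, and Lemma \ref{max1} then yields $\max \varphi_U \leq \frac{w-s+1}{p-1}\max D$.

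For (iii), the assumption of exactly $w$ jumps forces $s = w$ in the argument above, so each of the $w$ positive weights $w_r$ must equal exactly $1$. At each such index $r$ there is then a single coordinate $u_{d_r,r}$ equal to $1$, all others vanishing, and identity \eqref{saut} shows the corresponding jump equals $d_r \in D$. Conversely, from $\varphi_U$ alone one reads both the set $R$ (the positions where $\varphi_U(k+1) < p\varphi_U(k)$) and the value $d_r$ at each such position (the magnitude of the drop); this recovers every non-zero base-$p$ digit of $U$, so $U$ is reconstructed from $\varphi_U$.

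I expect no real difficulty here: everything follows directly from identity \eqref{saut} and Lemma \ref{jump}, together with Lemma \ref{max1}. The one substantive ingredient is the elementary combinatorial bound in (ii), and the mildest point of attention is the order-reversing indexing $r \leftrightarrow \ell-r-1$ linking base-$p$ digit positions to jump positions in $\Z/\ell\Z$.
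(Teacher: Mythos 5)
Your proposal is correct and follows essentially the same route as the paper: identity \eqref{saut} identifies jumps with the indices $r$ where $w_r\geq 1$, Lemma \ref{jump}(ii), Lemma \ref{max1} and the bound $\max w_r\leq w-s+1$ give (ii), and for (iii) the forced equalities $w_r=1$ pin down a single digit $u_{d_r,r}=1$ per jump, whence the jumps lie in $D$ and $U$ is recovered from the jump positions and magnitudes exactly as in the paper.
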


\begin{proof}
The first assertion comes from the properties of the map $\varphi_U$; from Lemma \ref{jump}, we have $p\varphi_U(i)\neq \varphi_U(i+1)$ if 
and only if $w_{\ell-i-1}>0$. Since we have $w=\sum w_r$, we get at most $w$ positive elements among the $w_r$, and this is the result.

Assertion (ii) is an easy consequence of Lemma \ref{jump} (ii), Lemma \ref{max1}, and the fact that we must have $\max w_i\leq w-s+1$.

In order to show (iii), first note that we must have $\ell\geq w$ in order for the support to have $w$ jumps. Denote by $i_1,\ldots,i_w$ the $w$
different jumps. From the proof of the first assertion, we must have $w_{\ell-i_k-1}>0$ for $1\leq k\leq w$. Since $w=\sum w_r$, we get $w_{\ell-i_k-1}=1$ for $1\leq k\leq w$, and all other $w_i$ vanish. Now we have $w_r=\sum_D u_{dr}$ for any $0\leq r\leq \ell-1$, thus for any $1\leq k\leq w$, there exists exactly one $d_k\in D$ such that $u_{d_k\ell-i_k-1}=1$ and all others $u_{di}$ are zero. From (\ref{saut}), we get $p\varphi_U(\ell-i_k-1)-\varphi_U(\ell-i_k)=d_k\in D$.

For the last assertion, just remark that with the above notations, for any $d\in D$, we have $u_d=\sum p^{i}$, where the sum is over those 
$i\in\{1,\ldots,\ell\}$ such that $p\varphi_U(\ell-1-i)-\varphi_U(\ell-i)=d$.
\end{proof}

\section{The density, and minimal irreducible solutions}
\label{sec3}

In this section, we fix an odd prime number $p$; we give the $p$-densities of the sets 
$D:=\{1\leq i\leq d,~(i,p)=1\}$. By Artin-Schreier reduction \cite[Exemple 3.5]{sga}, this is sufficient to prove Theorem \ref{bounds}. We also determine minimal irreducible solutions for the modular equations, in order to prove the results about Artin-Schreier curves in the next section.

\subsection{The case \texorpdfstring{$d=p^{n+1}-2$}{pdeux}}

We consider the set $D:=\{1\leq i\leq p^{n+1}-2,~(i,p)=1\}$. We have $\sigma_p(D)=s_p(p^{n+1}-2)=n(p-1)+p-2$; from \cite[Corollary 1.1]{dens}, 
we have the inequality $\delta(p,p^{n+1}-2)\geq \frac{1}{n(p-1)+p-2}$. On the other hand, for any $2\leq i\leq p-1$, the solutions 
$p^n(p^{n+1}-i)+ip^n-1$ have weight $2$ and length $2n+1$; as a consequence, they have density $\frac{2}{(2n+1)(p-1)}$, and we get the 
inequalities
$$ \frac{1}{n(p-1)+p-2} \leq \delta(p,p^{n+1}-2) \leq \frac{2}{(2n+1)(p-1)}$$

When $p=3$, we get the density. Assume $p\geq 5$ for a while, and the right hand inequality is strict. Let $U$ denote a minimal irreducible 
solution, with length $\ell$ and weight $w$; it has density 
$\delta(p,p^{n+1}-2)=\frac{w}{\ell(p-1)}$, and we get the inequalities $nw+\frac{1}{2}w< \ell\leq nw+\frac{p-2}{p-1}w$. As a consequence, we must have 
$w\geq 3$, and we can write $\ell=nw+i$ for some $\frac{1}{2}w< i\leq \frac{p-2}{p-1}w$. From Proposition \ref{bou}, we deduce that the support of 
$U$ satisfies
$$|\varphi_U|\geq \frac{w(w+1)}{2}\frac{p^n-1}{p-1}+\frac{i(i+1)}{2}p^n >  \frac{w(w+1)}{2}\frac{p^n-1}{p-1}+\frac{w(w+2)}{8}p^n$$
Applying \cite[Lemma 1.2 (i)]{dens}, we get $(p-1)|\varphi_U|=\sum_D ds_p(u_d)\leq (p^{n+1}-2)w$, and putting this together gives 
$(p^{n+1}+3p^n-4)w\ < 6p^{n+1}-2p^n-12$, that is $w < 6$.

It remains to treat the cases $w\in \{3,4,5\}$ separately

\begin{itemize}
	\item $w=3$; in this case we must have $i=2$, and the inequality becomes $6(p^n-1)+3(p^{n+1}-p^n)\leq 3p^{n+1}-6$, which is impossible
	\item $w=4$, $i=3$ here we get $10(p^n-1)+6(p^{n+1}-p^n)\leq 4p^{n+1}-8$, once again impossible
	\item $w=5$, $i=3$ here we get $15(p^n-1)+6(p^{n+1}-p^n)\leq 5p^{n+1}-10$, once again impossible
	\item $w=5$, $i=4$ here we get $15(p^n-1)+10(p^{n+1}-p^n)\leq 5p^{n+1}-10$, finally impossible.
\end{itemize}

As a consequence, we have proven the first assertion of the following

\begin{proposition}
\label{cas1}
The $p$-density of the set $D:=\{1\leq i\leq p^{n+1}-2,~(i,p)=1\}$ is $\delta(p,p^{n+1}-2)=\frac{2}{(2n+1)(p-1)}$.
\begin{itemize}
	\item[(i)]  When $n\geq 1$, the minimal irreducible solutions all have length $2n+1$; 
up to shift, they are the 
$$p^n\cdot(p^{n+1}-i)+1\cdot(ip^n-1)=p^{2n+1}-1,~1\leq i\leq p-1$$
\item[(ii)] When $n=0$, the minimal irreducible solutions can have length $1$ or $2$. Up to shift, they are the 
$$ \left\{ \begin{array}{rcl}
i+(p-1-i)=p-1,~1\leq i\leq \frac{p-1}{2} & \textrm{for} & \ell=1;\\
~p\cdot(p-2)+d_1+d_2+d_3=p^2-1 & \textrm{for} & \ell=2\\
\end{array}\right.$$
for some $d_1,d_2,d_3\in D$, $d_1+d_2+d_3=2p-1$.
\end{itemize}
\end{proposition}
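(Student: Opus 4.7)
The density value $\delta(p, p^{n+1}-2) = 2/((2n+1)(p-1))$ has already been established above, so I focus on classifying the minimal irreducible solutions. Any minimal solution $U$ satisfies $w/(\ell(p-1)) = 2/((2n+1)(p-1))$, whence $(2n+1)w = 2\ell$; since $\gcd(2,2n+1)=1$, this forces $w=2k$ and $\ell=(2n+1)k$ for some integer $k\geq 1$. The bulk of the proof is to show $k=1$ in case (i) and $k\in\{1,2\}$ in case (ii).

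For (i), suppose $n\geq 1$ and $k\geq 2$; let $s\leq w=2k$ be the number of jumps of $\varphi_U$. When $s=w$, each jump lies in $D$ by Proposition \ref{gapgeom}(iii), so Lemma \ref{max1} yields $\max\varphi_U\leq X:=(p^{n+1}-2)/(p-1)$, and a short computation gives $X<2p^n$. Then the only orbit of the $\times p$-action lying entirely in $[1,X]$ and having length $n+1$ is $\{1,p,\ldots,p^n\}$; every other orbit based at some $b\geq 2$ coprime to $p$ has length at most $n$. Since $\varphi_U$ is irreducible, its image consists of $\ell=(2n+1)k$ distinct values organized into $s$ orbits, so we need $(n+1)+(s-1)n=sn+1\geq (2n+1)k$, i.e.\ $s\geq 2k+(k-1)/n>2k$, contradicting $s\leq 2k$. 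For $s<w$, Proposition \ref{bou} together with Lemma \ref{min4} yields a lower bound on $|\varphi_U|$ that exceeds the upper bound $(p^{n+1}-2)w/(p-1)$; the required inequality
\[
\tfrac{s(s+1)}{2}\tfrac{p^q-1}{p-1}+\tfrac{r(r+1)}{2}p^q > \tfrac{2k(p^{n+1}-2)}{p-1}
\]
(with $\ell=qs+r$) is routine to verify because $q$ grows as $s$ shrinks. This last computation is the main technical obstacle, but it follows the same pattern as the $w\in\{3,4,5\}$ analysis already carried out.

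With $k=1$ so that $\ell=2n+1$ and $w=2$, Proposition \ref{gapgeom}(iii) says $\varphi_U$ has exactly two jumps (both in $D$) and that $U$ is determined by its support. After a shift, the two unit base-$p$ digits of $U$ may be written as $u_{d_1,0}=u_{d_2,t}=1$, so $\sum_d du_d=p^{2n+1}-1$ becomes $d_1+d_2 p^t=p^{2n+1}-1$. Reducing modulo $p^t$ gives $d_1\equiv -1\pmod{p^t}$; setting $d_1=jp^t-1$ yields $d_2=p^{2n+1-t}-j$. The bounds $d_1,d_2\leq p^{n+1}-2$ force $t=n$ (a larger $t$ makes $d_1$ exceed $p^{n+1}-2$, a smaller $t$ makes $d_2$ exceed it), and the admissible $j$'s are precisely those in $\{1,\ldots,p-1\}$ for which $(jp^n-1,p^{n+1}-j)\in D\times D$, yielding the family listed in the statement.

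For part (ii), $n=0$: here $X=(p-2)/(p-1)<1$, so the $s=w$ branch of the orbit argument is ruled out outright. One uses instead $(p-1)|\varphi_U|\leq(p-2)w=2k(p-2)$ together with the irreducibility bound $|\varphi_U|\geq k(k+1)/2$ to force $k\in\{1,2\}$. For $\ell=1,w=2$ the equation $\sum_d du_d=p-1$ is solved by $u_i=u_{p-1-i}=1$ for $1\leq i<(p-1)/2$ and $u_{(p-1)/2}=2$, giving the $(p-1)/2$ families listed. For $\ell=2, w=4$ the bounds pin $|\varphi_U|=3$ with image equal to $\{1,2\}$, and equation \eqref{saut} gives $\sum_d du_{d,0}=2p-1$ and $\sum_d du_{d,1}=p-2$; the minimum-weight digit decompositions of these sums force $u_{p-2,1}=1$ together with three bottom digits $d_1,d_2,d_3\in D$ summing to $2p-1$, exactly as stated.
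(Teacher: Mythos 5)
Your overall architecture is sound and uses the same toolkit as the paper (the weight bound $(p-1)|\varphi_U|\le w\max D$, Lemma \ref{max1}, Proposition \ref{bou}, equation \eqref{saut}); your part (ii) is essentially the paper's argument, and your two nice uniformizations (the orbit count $\ell\le sn+1$ when $s=w$, and $|\varphi_U|\ge k(k+1)/2$ when $n=0$) are clean substitutes for the paper's reduction to $w<6$ followed by a jump-by-jump analysis of $w=4$. But there are two gaps. The serious one is in the $w=2$, $n\ge1$ classification: after shifting a unit digit to position $0$ you simply assert $\sum_D du_d=p^{2n+1}-1$, i.e.\ that the multiplier is $1$ (equivalently $1\in\Ima\varphi_U$). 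This is not automatic: for $p=3$, $n=1$ the weight-two solution $u_7=1$, $u_5=9$ already has a unit digit in position $0$, yet $\sum_D du_d=2(3^3-1)$; it happens to be a shift of a listed solution, but that is precisely what must be proved. The paper obtains $1\in\Ima\varphi_U$ from $(p-1)|\varphi_U|=d_1+d_2\le 2p^{n+1}-4$ together with the run analysis (no geometric run of length $\ge n+2$, then $n_1(p^{n+1}-1)+n_2(p^n-1)\le 2p^{n+1}-4$ forces $n_1=1$); alternatively one can shift so that the digit whose cyclic distance to the other is at most $n$ sits at position $0$, which bounds the multiplier by $(p^{n+1}-2)(1+p^n)/(p^{2n+1}-1)<2$. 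Without some such step your reduction to $d_1+d_2p^t=p^{2n+1}-1$ is unjustified. (Two minor points here: Proposition \ref{gapgeom}(iii) does not say a weight-two solution \emph{has} two jumps, it is a conditional statement; and the degenerate weight-two configurations, a single $u_d$ with a digit equal to $2$ or with two unit digits, should at least be dismissed, which is easy since $p^{2n+1}-1\nmid 2d$ and $d(1+p^t)=p^{2n+1}-1$ is impossible.)

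The lesser gap is the $s<w$ branch for $k\ge2$. The inequality you display is in fact true, but your stated reason, ``$q$ grows as $s$ shrinks,'' does not address the only delicate regime: when $s$ is just below $2k$ one has $q=n$ (no growth at all), and positivity comes instead from $r=\ell-ns\ge k+n\ge k+1$, so that $\frac{r(r+1)}{2}p^n(p-1)$ beats $2kp^{n+1}$; conversely when $s$ is small one has $q\ge n+1$ but possibly $\frac{s(s+1)}{2}<2k$, and one must then use the size of $p^{q}$ itself. So this step needs an explicit two-regime verification rather than the one-line remark; this is exactly the place where the paper instead first proves $w\le4$ and then disposes of $w=4$ by the case analysis on the number of jumps $t\in\{1,\dots,4\}$. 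Finally, note that your conclusion for $w=2$ gives $j$ with $jp^n-1$ and $p^{n+1}-j$ both in $D$, hence $2\le j\le p-1$, which agrees with the paper's proof (the bound $1\le i$ in the statement is a slip of the paper, since $p^{n+1}-1\notin D$), so no fault there.
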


\begin{proof}
We come back to the general case $p\geq 3$. We first treat the case $n\geq 1$, and look for the minimal irreducible solutions; let $U$ denote 
one, having length $\ell$ and weight 
$w$. We must have $\ell=nw+\frac{w}{2}$, and $w$ must be even. Exactly as above, we get $w<6$, and the only remaining possibilities are $w=2$ or 
$w=4$.

We first treat the case $w=2$; here we have $\ell=2n+1$; from \cite[Lemma 1.4 (i)]{dens}, we get the inequality $(p-1)|\varphi_U|\leq 2p^{n+1}-4$. 
If the support $\varphi_U$ contains a geometric subsequence of length $\geq n+2$, then we get $(p-1)|\varphi_U|\geq p^{n+2}-1$, a contradiction 
with the preceding inequality. Thus the support of $U$ consists of two geometric subsequences of respective lengths $n+1$ and $n$. Call $n_1$ 
and $n_2$ their initial terms; then we have $n_1(p^{n+1}-1)+n_2(p^{n}-1)\leq 2p^{n+1}-4$, implying $n_1=1$ and $n_2(p^n-1)\leq p^{n+1}-3$, that 
is $1< n_2< p$, the strict inequalities coming from the irreducibility of $U$.

From the third assertion of Proposition \ref{gapgeom}, we deduce that the solution corresponding to the support 
$\{1,\ldots,p^n,n_2,\ldots,p^{n-1}n_2\}$ is $p^n(p^{n+1}-n_2)+1\cdot(p^nn_2-1)=p^{2n+1}-1$; this gives all the announced solutions.

It remains to show that there is no irreducible solution in the case $w=4$; we assume $\ell=4n+2$ and $w=4$. We consider four distinct cases,
according to the possible number of jumps $1\leq t\leq 4$ in its support.

 Assume first that there exists such a solution $U$, whose support is a support map with $4$ jumps. The jumps are contained in $D$, and
	bounded by $p^{n+1}-2$; from Lemma \ref{max1}, we must have 
	$$\varphi_U(i)\leq \frac{p^{n+1}-2}{p-1}=p^n+\cdots+p+1-\frac{1}{p-1}$$
	Let us denote by $n_k,\cdots,n_kp^{\ell_k-1}$, $1\leq k\leq 4$, the geometric subsequences of $\varphi_U$; we must have $\ell_k\leq n+1$ 
	from the inequality above, and moreover $\ell_k=n+1$ implies $n_k=1$. From the irreducibility of $U$, the $n_k$ must be pairwise distinct, 
	and we get that all $\ell_k$ must be less than or equal to $n$, except at most one, which is $n+1$. This contradicts the equality 
	$\sum \ell_k=4n+2$.

Now we consider a solution $U$ whose support has $t$ jumps, $1\leq t\leq 3$. Assume $t=3$; the jumps are the sums of at most two elements
	in $D$, and the maximal jump is at most $2(p^{n+1}-2)$. Applying Lemma \ref{max1}, 
	we get $\varphi_U(i)\leq 2(p^n+\cdots+p+1-\frac{1}{p-1})$ for any $i$. As above, we can have at most two geometric subsequences of length 
	$n+1$, with initial terms $1$ and $2$. The last geometric subsequence has length at most $n$, and we must have 
	$4n+2=\ell_1+\ell_2+\ell_3\leq 3n+2$, a contradiction. 
	
Now assume $t=2$; Lemma \ref{max1} gives the inequality 
	$\varphi_U(i)\leq 3(p^n+\cdots+p+1-\frac{1}{p-1})$. If $p=3$, one of the geometric subsequences can have length $n+2$, and initial term $1$, 
	while the other one has length at most $n+1$; we get $4n+2=\ell_1+\ell_2\leq 2n+3$, a contradiction. If $p\geq 5$, both subsequences have 
	length at most $n+1$, which gives another contradiction. 
	
	Finally, in the case $t=1$, the support of $U$ is a geometric sequence of length $4n+2$, with jump a sum of four elements in $D$.
	The inequality 
	$\varphi_U(i)\leq 4(p^n+\cdots+p+1-\frac{1}{p-1})$ from Lemma \ref{max1} shows that it has length at most $n+1$ ($n+2$ when $p=3$). 
	
	In any case we get a contradiction.
	
	It remains to treat the case $n=0$, i.e. $D=\{1,\cdots,p-2\}$. First assume we have $w=2$ and $\ell=1$; we get the inequality 
	$(p-1)|\varphi_U|\leq 2p-4$, and we must have $|\varphi_U|=1$. In this way we obtain the solutions of the first type. Now if we have 
	$w=4$ and $\ell=2$, we get the inequality $(p-1)|\varphi_U|\leq 4p-8$, and we must have $|\varphi_U|=3$, $\varphi_U=\{1,2\}$. A solution 
	with this support must have the form $\sum_{i=1}^4 p^{\varepsilon_i}d_i=p^2-1$ for some $\varepsilon_i\in \{0,1\}$ and $d_i\in D$, 
	with $\sum_{\varepsilon_i=0} p^{\varepsilon_i}d_i=p-2$ and $\sum_{\varepsilon_i=1} p^{\varepsilon_i}d_i=2p-1$. Since the sum of two elements
	in $D$ is at most $2p-4$, we can assume $\varepsilon_4=0$ and $\varepsilon_1=\varepsilon_2=\varepsilon_3=1$. We get all solutions of the 
	second form in this way.
\end{proof}

\begin{remark}
Compare assertion (ii) above with \cite[Theorem 3.8]{cfm}; this last result gives the minimal solutions of length one, and actually these are the only ones to be considered when $d<p-2$. But some length $2$ minimal solutions appear when $d=p-2$.
\end{remark}

We have proven the second and the third inequalities in Theorem \ref{bounds}. Actually when $n\geq 1$, assertion (i) of the proposition above shows that there exist solutions of density $\frac{2}{(2n+1)(p-1)}$ for the set $D$ as long as $d\geq p^{n+1}-p-1$. Assertion (ii) shows that there exist solutions of density $\frac{2}{p-1}$ for the set $D$ if, and only if $D$ contains both $i$ and $p-1-i$ for some $i\leq \frac{p-1}{2}$ if, and only if $d\geq \frac{p-1}{2}$.

\subsection{The case \texorpdfstring{$d=p^{n+1}-p-1$, $n\geq 2$}{ptrois}}
\label{pnplusone}

From the description of the minimal irreducible solutions for the set $\{1\leq i\leq p^{n+1}-2,~(p,i)=1\}$ and the prime $p$, we see that 
when $d=p^{n+1}-p-1$, there no longer exist solutions with density $\frac{2}{(2n+1)(p-1)}$. Thus we get the following bounds on the density of 
the new set $D=\{1\leq i\leq p^{n+1}-p-1,~(p,i)=1\}$
$$\frac{2}{(2n+1)(p-1)}<\delta(p,p^{n+1}-p-1)\leq \frac{1}{n(p-1)}.$$

Our aim here is to show that the right-hand inequality is actually an equality, and to describe the minimal irreducible solutions.

Let $U$ denote an irreducible solution with weight $w$ and length $\ell$; assume its density lies in the interval $\left]\frac{2}{(2n+1)(p-1)}, 
\frac{1}{n(p-1)}\right]$. Then we can write $\ell=nw+r$ where $0\leq r<\frac{w}{2}$.

From Proposition \ref{bou}, we deduce that the support of $U$ satisfies
$$|\varphi_U|\geq \frac{w(w+1)}{2}\frac{p^n-1}{p-1}+\frac{i(i+1)}{2}p^n >  \frac{w(w+1)}{2}\frac{p^n-1}{p-1}$$
Applying \cite[Lemma 1.2 (i)]{dens}, we get $(p-1)|\varphi_U|=\sum_D ds_p(u_d)\leq (p^{n+1}-p-1)w$, and putting this together gives 
$\frac{w+1}{2}(p^n-1)<p^{n+1}-p-1$, and $w<2p-1$. 

As in the preceding subsection, we consider different cases, according to the number of jumps in the support. Let $\ell_1,\ldots,\ell_w$ and $n_1,\ldots,n_w$ denote the respective lengths and initial terms of its geometric subsequences, with $\sum \ell_i=\ell=nw+r$.

\begin{lemma}
\label{case1}
Let $U$ denote a solution with length $\ell$ and weight $w$ such that $\ell=nw+r$, $0\leq r<\frac{w}{2}$. If the support of $U$
has $w$ jumps, then we have $r=0$.
\end{lemma}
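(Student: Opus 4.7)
My plan is to combine the jump-bound provided by Proposition~\ref{gapgeom}(iii) with Lemma~\ref{max1} to constrain the lengths of the geometric runs of $\varphi_U$, and then walk around the cyclic structure of the support to rule out $r=1$. Since the support has exactly $w$ jumps, Proposition~\ref{gapgeom}(iii) gives that every jump lies in $D$, hence is at most $\max D = p^{n+1}-p-1$. Lemma~\ref{max1} then yields
$$\varphi_U(i)\leq \frac{p^{n+1}-p-1}{p-1}<\frac{p^{n+1}-1}{p-1}=p^n+\cdots+p+1,$$
so in fact $\varphi_U(i)\leq B:=p^n+p^{n-1}+\cdots+p-1$ throughout. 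Because $\varphi_U$ is an irreducible support map (Proposition~\ref{gapgeom}(i)), its image splits as a disjoint union of $w$ geometric runs $\{m_k,pm_k,\ldots,p^{\ell_k-1}m_k\}$ with $\sum_k\ell_k=\ell=nw+r$.

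From the inequality $p^{\ell_k-1}m_k\leq B$, together with the elementary observation $2p^n>p^n+\cdots+p>B$ for $p\geq 3$, I get $\ell_k\leq n+1$, with equality forcing $m_k=1$. Summing over $k$, $\ell\leq(n+1)+(w-1)n=nw+1$, so $r\leq 1$. To rule out $r=1$, note that this case would force exactly one block of shape $(m_k,\ell_k)=(1,n+1)$ and $w-1\geq 2$ blocks of length $n$ (here $w\geq 3$ because $r<w/2$); the same bound applied to a length-$n$ block gives $m_k\leq p+1$ when $n\geq 3$, and $m_k\leq p$ when $n=2$. Following the support cyclically, the jump leaving the $1$-block is $p^{n+1}-m'$, where $m'$ starts the next run, and membership in $D$ forces $m'\geq p+1$. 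When $n=2$ this already clashes with $m'\leq p$; when $n\geq 3$ it forces $m'=p+1$, and then the jump leaving this $(p+1)$-block equals $(p+1)p^n-m''$, which belongs to $D$ only if $m''\geq p^n+p+1$, flatly contradicting $m''\leq p+1$. Hence $r=0$.

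The main technical point is the sharp integer bookkeeping that yields $\varphi_U(i)\leq B$: it must be tight enough to exclude both length-$(n+1)$ runs with $m_k\geq 2$ and length-$n$ runs with $m_k\geq p+2$ (respectively $m_k\geq p+1$ when $n=2$), since this is precisely what makes the short cyclic walk from the $1$-block through a forced $(p+1)$-block produce an immediate contradiction. The case $n=2$ is handled one step earlier in this walk and requires the slightly stronger bound $m_k\leq p$ on length-$n$ blocks, which is why it deserves separate attention.
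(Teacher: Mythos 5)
Your proof is correct and follows essentially the same route as the paper: bound the jumps by $\max D$ via Proposition \ref{gapgeom}(iii), apply Lemma \ref{max1} to force at most one run of length $n+1$ (necessarily starting at $1$), conclude $r\leq 1$, and then rule out $r=1$ by walking cyclically from the $1$-run to force a next initial term equal to $p+1$ and then an impossible one of size at least $p^n+p+1$. Your explicit separation of the case $n=2$ (where the bound $m'\leq p$ already contradicts $m'\geq p+1$) is a slightly more careful rendering of a step the paper glosses over, but it is the same argument.
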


\begin{proof}
Assume we have $r\geq 1$, and the support of $U$ has $w$ jumps. Note that we must have $w\geq 3$, and the jumps all lie in $D$ from Proposition 
\ref{gapgeom}. Shifting if necessary, we can assume $\ell_1=\max\{\ell_i\}$. From Lemma \ref{max1}, we have 
$p^{\ell_1-1}i_1\leq \frac{p^{n+1}-p-1}{p-1}$, and we get $\ell_1\leq n+1$, and $i_1=1$ 
if this is an equality. As a consequence, we have at most one geometric subsequence of length $n+1$, all other having length at most $n$. 
From the equality $\sum \ell_i=\ell=nw+r$, we must have $r=1$, $\ell_1=n+1$ and $\ell_2=\ldots=\ell_w=n$. Now we have 
$n_2\geq p^{n+1}-\max D=p+1$, and from Lemma \ref{max1}, $p^{n-1}n_2<\frac{p^{n+1}-p-1}{p-1}$. Thus we have $n_2=p+1$; since $w\geq 3$, 
we can consider $n_3$, and we have $p^nn_2-n_3=p^{n+1}+p^n-n_3\leq \max D$, and $n_3\geq p^n+p+1$, contradicting the inequality
$p^{n-1}n_3<\frac{p^{n+1}-p-1}{p-1}$ from Lemma \ref{max1}. As a consequence, there is no solution of density in 
$\left]\frac{2}{(2n+1)(p-1)}, \frac{1}{n(p-1)}\right[$, and whose support has $w$ jumps.

\end{proof}

In the following, we assume that the support of $U$ has $t$ jumps with $t<w$, and let $\ell_1,\ldots,\ell_t$ and $n_1,\ldots,n_t$ denote the respective lengths and initial terms of its geometric subsequences, with $\sum \ell_i=\ell=nw+r$. We also denote by $j_1,\ldots,j_t$ the jumps
of the support $\varphi_U$.

\begin{lemma}
\label{case2}
Let $U$ denote a solution with length $\ell$ and weight $w$ such that $\ell=nw+r$ and $0\leq r<\frac{w}{2}$; assume moreover that the support 
of $U$ has $t$ jumps, with $t<w$. Then the maximal length of a geometric subsequence in this support is $n+1$.
\end{lemma}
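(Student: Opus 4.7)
The plan is to argue by contradiction. Suppose the support of $U$ contains a geometric subsequence of length $\ell_1 \geq n+2$, taken to be the longest; write its starting element as $m_1$. The aim is to derive a length count that contradicts the bound $w \leq 2p-2$ established earlier.

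First I would bound $\max\varphi_U$ from above. Since the support has $t$ jumps, at most $t$ of the $r$-th weights $w_r$ are nonzero, and summing to $w$ they satisfy $\max_r w_r \leq w-t+1$. By equation (\ref{saut}) each jump equals $\sum_D d\, u_{d,r} \leq w_r \max D = w_r(p^{n+1}-p-1)$, so the maximal jump is at most $(w-t+1)(p^{n+1}-p-1)$. Lemma \ref{max1} then yields
\[
\max \varphi_U \leq \frac{(w-t+1)(p^{n+1}-p-1)}{p-1}.
\]

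Next I would pin down the first arc. Since $m_1 p^{\ell_1 - 1}$ lies in the image and $\ell_1 - 1 \geq n+1$, this gives $m_1 < (w-t+1)/(p-1)$. Using $w \leq 2p-2$ forces $m_1 < 2$, whence $m_1 = 1$; moreover $m_1 \geq 1$ requires $w - t + 1 \geq p$, i.e.\ $t \leq w-p+1$. The same inequality applied with $\ell_1 \geq n+3$ yields $m_1 < 1$, impossible, so $\ell_1 = n+2$ exactly and the first arc is $\{1,p,\ldots,p^{n+1}\}$. Applying this argument to any other arc rules out $\ell_k \geq n+2$ for $k \geq 2$: either $m_k$ has a prime-to-$p$ part $\geq 2$, so $m_k \geq 2 > (w-t+1)/(p-1)$, or $m_k = p^a$ with $a \geq n+2$, which exceeds $\max \varphi_U < p^{n+2}$. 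Hence $\ell_k \leq n+1$ for every $k \geq 2$.

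Summing lengths,
\[
nw + r = \ell_1 + \sum_{k\geq 2} \ell_k \leq (n+2) + (t-1)(n+1) \leq (n+2) + (w-p)(n+1),
\]
and a short rearrangement yields $w \geq (n+1)(p-1) - 1$. Together with $w \leq 2p-2$ this gives $(n+1)(p-1) \leq 2p-1$, contradicting $n \geq 2$ and $p \geq 3$, since the left side is at least $3(p-1) = 3p-3 > 2p-1$. The main obstacle is the case analysis in the middle paragraph, where one must rule out other arcs of length $\geq n+2$ with a starting value that is a pure power of $p$; the explicit bound $\max\varphi_U < p^{n+2}$ is what closes this case cleanly.
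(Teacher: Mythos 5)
Your argument is correct, and it reaches the contradiction by a genuinely lighter route than the paper, so it is worth comparing. Both proofs share the same skeleton: assume an arc of length at least $n+2$, force it to be exactly $1,p,\ldots,p^{n+1}$, extract from the presence of $p^{n+1}$ in the support that $w-t\geq p-1$, and finish with a length count. But the paper works with the \emph{total} sum $(p-1)\lvert\varphi_U\rvert\leq w\max D$ to locate the long arc, then needs finer bookkeeping: the jump out of $p^{n+1}$ forces the remaining jump-weights to be at most $p-t$, which bounds the initial terms of length-$(n+1)$ arcs and limits their number to $p-t-1$, and the contradiction comes from comparing with $\ell\geq nt+n(p-1)+r$. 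You instead use only the pointwise bound $\max\varphi_U\leq\frac{(w-t+1)\max D}{p-1}$ (i.e.\ Proposition \ref{gapgeom}(ii) via Lemma \ref{max1}), deduce $t\leq w-p+1$ directly from $p^{n+1}\leq\max\varphi_U$, and close with the crude count $\ell\leq(n+2)+(t-1)(n+1)$, which already contradicts $w\leq 2p-2$ once $n\geq 2$; no control on the other jump-weights or on the number of length-$(n+1)$ arcs is needed. Two small points of hygiene: the step ``$m_k\geq 2>(w-t+1)/(p-1)$'' is not literally right when $w-t+1=2p-2$, but it is harmless because the inequality $m_k<(w-t+1)/(p-1)\leq 2$ coming from $\frac{p^{n+1}-p-1}{p^{n+1}}<1$ is strict, so $m_k=1$ and injectivity of $\varphi_U$ excludes it; and your exclusion of $m_k=p^a$ with $a\leq n+1$ silently uses that same injectivity (irreducibility of $U$), which, like the bound $w\leq 2p-2$, is not in the lemma's statement but is part of the standing hypotheses of the subsection and is equally used in the paper's own proof. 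Note that the paper's finer intermediate facts are recycled later in the case $d=p^2-p-1$ (where $n\geq 2$ is unavailable), so your shortcut proves this lemma but would not replace everything the paper extracts from its proof.
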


\begin{proof}

Assume the support of $U$ has $t$ jumps, $t<w$, 
and at least one geometric subsequence of length $\ell_1\geq n+2$. From \cite[Lemma 1.2 (i)]{dens} and the bound $w<2p-1$ we obtained above, 
we have
$$(p-1)\sum_{i=0}^{\ell-1} \varphi_U(i)=\sum_{k=1}^t (p^{\ell_k}-1)n_k\leq w(p^{n+1}-p-1) < 2p^{n+2}-2$$

As a consequence, we must have $\ell_1=n+2$, $\varphi_U(0)=n_1=1$ and $\ell_k\leq n+1$ for any $2\leq k\leq t$. Moreover, we get 
$\varphi_U(n+1)=p^{n+1}$, and the inequality
$$ (p-1)\sum_{i\neq n+1} \varphi_U(i)\leq (2p-2)(p^{n+1}-p-1)-(p-1)p^{n+1}<(p-1)p^{n+1}$$
Thus $\varphi_U(n+1)$ is the maximum of the $\varphi_U(i)$, and we obtain the following inequality $p\varphi_U(n+1)-\varphi(n+2)> (p-1)p^{n+1}$. 
The first jump must satisfy $j_{1}> \frac{(p-1)p^{n+1}}{p^{n+1}-p-1}>p-1$. We deduce that $s:=w-t=\sum(j_i-1)\geq p-1$. Moreover we have 
$\sum_{i>1} j_i\leq p-2$, and since we have $j_i\geq 1$ for any $i$, we find $j_i\leq p-t$ for any $i>1$.

As a consequence, we obtain, for any $n+2\leq i<\ell$, the inequality $\varphi_U(i+1)\geq p\varphi_U(i)-(p-t)(p^{n+1}-p-1)$; since we have 
$\varphi_U(\ell)=1<\varphi_U(i)$ for any $n+2\leq i<\ell$, we deduce (as in the proof of Lemma \ref{max1}) the inequality 
$\varphi_U(i)\leq (p-t)(p^{n+1}-p-1)/(p-1)$ for any $n+2\leq i<\ell$. Assume $\varphi_U(i)$ is the last term of a geometric subsequence
of length $n+1$; in this case we have $\varphi_U(i)=p^n\varphi_U(i-n)\leq (p-t)(p^{n+1}-p-1)/(p-1)$, and lastly the inequality $\varphi_U(i-n)\leq (p-t)(1+\frac{1}{p}+\cdots+\frac{1}{p^{n-1}}-\frac{1}{(p-1)p^n})<(p-t)(1+\frac{1}{p-1})$. If we have $t>1$, then we get $\varphi_U(i-n)< p-t+1$; for 
$t=1$, we get $\varphi_U(i-n) \leq p$, but equality is impossible since we assumed the solution $U$ irreducible, and we already have $\varphi_U(1)=p$. We get $n_k\leq p-t$, and there 
are at most $p-t-1$ subsequences of length $n+1$ (no one can begin with $n_k=1$ since $U$ is irreducible and $n_1=1$). This gives the inequality

$$\ell\leq n+2+(p-t-1)(n+1)+\left(t-1-(p-t-1)\right)n$$

the first term coming from the subsequence of length $n+2$, the second from the ones of length $n+1$, and the last from
the remaining ones, of length at most $n$.

On the other hand, we have $\ell=nw+r=n(t+s)+r\geq nt+n(p-1)+r$. Comparing both inequalities, we must have $(n-1)(p-1)\leq 2-r-t$. 
Since we have $n\geq 2$, and $t\geq 1$, this is clearly impossible.

\end{proof}

\begin{lemma}
\label{case3}
A solution satisfying the conclusion of the Lemma above must verify $r=0$, and its support has $w$ jumps.
\end{lemma}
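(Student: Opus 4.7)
The plan is to rule out the possibility $t < w$ outright, so that combined with Lemma \ref{case1} we are forced to $r = 0$ and $t = w$ jumps. First I would count the number $a$ of length-$(n+1)$ geometric subsequences in the support: since every subsequence has length $\leq n+1$ by Lemma \ref{case2}, summing lengths gives $\ell = nw + r \leq a(n+1) + (t-a) n = nt + a$, whence $a \geq ns + r$ with $s := w - t \geq 1$. On the other hand, by Proposition \ref{gapgeom}(ii) every jump is bounded above by $(s+1) \max D = (s+1)(p^{n+1}-p-1)$, and Lemma \ref{max1} then gives $n_k p^n \leq (s+1)(p^{n+1}-p-1)/(p-1)$, i.e.\ $n_k < (s+1)p/(p-1)$ for the initial term $n_k$ of any length-$(n+1)$ subsequence. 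A direct count shows that the positive integers strictly less than $(s+1)p/(p-1)$ and coprime to $p$ number exactly $s + 1$, so $a \leq s + 1$. Combining the two inequalities yields $(n-1)s + r \leq 1$, which for $n \geq 2$ and $s \geq 1$ forces $n = 2$, $s = 1$, $r = 0$.

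The remaining corner case $n = 2$, $s = 1$, $r = 0$ requires a separate argument, and this will be the hard part. Here $\ell = 2w$, $t = w - 1$, and exactly $a = 2$ subsequences have length $3$; their initial terms must be the only two coprime-to-$p$ integers below $2p/(p-1)$, namely $1$ and $2$. Length counting then forces the remaining $w - 3$ subsequences all to have length exactly $2$ (no length-$1$ is possible), and the same maximum-jump argument gives $m_i \leq 2p + 1$ for their initial terms.

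I would close with a jump analysis at the end of the length-$3$ subsequence starting at $2$: the jump there equals $2p^3 - n_{\mathrm{next}}$, where $n_{\mathrm{next}}$ is the initial term of the cyclically following subsequence. Since every initial term is at most $2p + 1$, this jump is at least $2p^3 - 2p - 1$. But each jump is a sum of at most $s + 1 = 2$ elements of $D$, hence at most $2(p^3 - p - 1) = 2p^3 - 2p - 2$, which contradicts $2p^3 - 2p - 1 > 2p^3 - 2p - 2$. The main obstacle is precisely this last step: the initial-term count barely fails to eliminate the case $n = 2$ on its own, and one must exploit the sharper upper bound on individual jump values (rather than just on their total) to extract the contradiction.
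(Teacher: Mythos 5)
Your overall strategy is the same as the paper's: bound the number $a$ of length-$(n+1)$ subsequences from below by $ns+r$ and from above via Lemma \ref{max1}, reduce to a small corner case, and kill it with a jump-size argument. But your key inequality $a\leq s+1$ has a genuine gap as written: you count the integers coprime to $p$ below $(s+1)p/(p-1)$, yet you never show that the initial terms of the length-$(n+1)$ subsequences are coprime to $p$. This is not automatic here — it is precisely in the regime $t<w$ that a jump is a sum of $w_r\geq 2$ elements of $D$, which may be divisible by $p$, so the value $p\varphi_U(i)-j$ opening the next run can be divisible by $p$. (Also, the bound "each jump is at most $(s+1)\max D$" comes from Lemma \ref{jump}(ii) together with $\max w_r\leq s+1$, not from Proposition \ref{gapgeom}(ii), which bounds $\max\varphi_U$.) Without coprimality the honest count is the paper's, namely $a<\frac{p(s+1)}{p-1}$, and then the reduction does not stop at $(n,s,r)=(2,1,0)$: the case $p=3$, $n=2$, $s=2$, $r=0$ (i.e.\ $d=23$) survives, and your proposal never treats it — the paper disposes of it by an exhaustive computation.

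The gap is repairable, and it is worth doing: if an initial term of a length-$(n+1)$ run were divisible by $p$, the top term of that run would be at least $p^{n+1}$, which exceeds the Lemma \ref{max1} bound $(s+1)\frac{p^{n+1}-p-1}{p-1}$ as soon as $s+1\leq p-1$; and $s\geq p-1$ is impossible, since then you would need $a\geq ns+r\geq 2(p-1)$ runs of length $n+1$ while the total number of runs is only $t=w-s\leq (2p-2)-(p-1)=p-1$ (using $w\leq 2p-2$). With this patch your sharper count is valid and in fact improves on the paper, since it removes the need for the exhaustive check at $p=3$. Your corner case $n=2$, $s=1$, $r=0$ is handled correctly and does not rely on coprimality (the numerical bound already forces the two initial terms to be $1$ and $2$); comparing the jump $2p^3-n_{\mathrm{next}}\geq 2p^3-2p-1$ with the maximal possible jump $2(p^3-p-1)$ is a clean alternative to the paper's argument, which instead derives $n_{k+1}\geq 2p+2$ and contradicts the bound on $\max\varphi_U$ through the element $pn_{k+1}$ of the support.
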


\begin{proof}
We must have $\ell_i\leq n+1$ for all $i$. Set $s:=w-t$; all jumps are less than or equal to $(s+1)\max D$, and 
Lemma \ref{max1} gives the upper bound 
$p^{\ell_k-1}n_k\leq (s+1)\frac{p^{n+1}-p-1}{p-1}$ for any $1\leq k\leq t$. Since the $n_k$ are pairwise distinct, the number of geometric 
subsequences with $\ell_k=n+1$ is at most 
$$\left\lfloor(s+1)\frac{p^{n+1}-p-1}{p^n(p-1)}\right\rfloor= \left\lfloor(s+1)\left(\sum_{i=0}^{n-1} \frac{1}{p^{i}}-
\frac{1}{p^{n}(p-1)}\right)\right\rfloor<\frac{p(s+1)}{p-1}.$$

From the equality $\sum \ell_i=nw+r=nt+(ns+r)$, we must have at least $ns+r$ subsequences of length $n+1$. As a consequence, we get 
$\frac{p(s+1)}{p-1}> ns+r\geq ns+1$.

First assume $r>0$; since $n\geq 2$, we get $\frac{p(s+1)}{p-1}> 2s+1$, and $\frac{p}{p-1}> 2-\frac{1}{s+1}\geq \frac{3}{2}$, 
which is impossible for odd $p$.

When $r=0$, we get the inequalities $\frac{3}{2}\geq \frac{p}{p-1}>n\frac{s}{s+1}\geq \frac{n}{2}$, and we are reduced to the case $n=2$. Then we obtain the inequality $s<\frac{p}{p-2}$ and we must have $s=1$, or $s=2$ and $p=3$. 

The last case corresponds to the set $D=\{1\leq i\leq 23,~(i,3)=1\}$ when $p=3$; an exhaustive calculation gives the minimal irreducible solutions. 
These are the ones given in the next proposition, and all have their support with $w$ jumps. 

In the case $n=2$, $s=1$, the support has length $2w$, weight $w$, and consists of 
$w-1$ geometric subsequences of length at most $3$; moreover the maximal jump is at most $(s+1)\max D=2(p^3-p-1)$. From Lemma \ref{max1}, the elements in
the support of $U$ all satisfy $\varphi_U(i)\leq 2\frac{p^3-p-1}{p-1}=2\left(p^2+p-\frac{1}{p-1}\right)$, and there exist at most two geometric 
sequences of length $3$. If we denote by $t_i$ the number of geometric subsequences of length $i$ for $1\leq i\leq 3$, we have $\sum t_i=w-1$, 
and $\sum it_i=2w$. From these, we get $t_3=2+t_1\geq 2$, $w\geq 3$, and there exist at least two geometric subsequences of length $3$. As a 
consequence, there exist two geometric sequences of length $3$, and the $w-3$ remaining ones have length $2$. Thus there exists some $k$ such that $\ell_k=3$, and $n_k\geq 2$; since the maximal jump is at most $(s+1)\max D=2(p^3-p-1)$, we get $n_{k+1}\geq p^3n_k-2(p^3-p-1)\geq 2p+2$. 
Recall the inequality $\varphi_U(i)\leq 2\frac{p^3-p-1}{p-1}=2\left(p^2+p-\frac{1}{p-1}\right)$ for any element in 
the support of $U$; since all lengths are at least $2$, $pn_{k+1}$ lies in the support, with $pn_{k+1}\geq 2p^2+2p$, a contradiction. 

\end{proof}

Summarizing the results of the three lemmas above, we have proven

\begin{proposition}
\label{wgeom}
The set $D=\{1\leq i\leq p^{n+1}-p-1,~(p,i)=1\}$ has $p$-density $\delta(p,p^{n+1}-p-1)=\frac{1}{n(p-1)}$. Moreover, the support of any minimal irreducible solution 
with weight $w$ has $w$ jumps.

\end{proposition}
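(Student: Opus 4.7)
The plan is to assemble the three lemmas just proven. The upper bound $\delta(p,p^{n+1}-p-1)\le\tfrac{1}{n(p-1)}$ has already been noted at the start of this subsection; it is realized, for instance, by the length-$n$ weight-$1$ solution $U$ with $u_{p^n-1}=1$ and all other coordinates zero (note $p^n-1\in D$ for every $n\ge 1$), which has density $\tfrac{1}{n(p-1)}$.

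For the matching lower bound I would argue by contradiction. Let $U$ be an irreducible solution with density strictly between $\tfrac{2}{(2n+1)(p-1)}$ and $\tfrac{1}{n(p-1)}$; writing its weight and length as $w$ and $\ell=nw+r$, the density condition translates to $0<r<\tfrac{w}{2}$. Let $t$ be the number of jumps of the support map $\varphi_U$, with $1\le t\le w$ by Proposition \ref{gapgeom}(i). If $t=w$, Lemma \ref{case1} forces $r=0$, a contradiction. If $t<w$, Lemma \ref{case2} shows that every geometric subsequence of $\varphi_U$ has length at most $n+1$, and Lemma \ref{case3} then rules this configuration out (its case analysis excludes both $r>0$ and, under the hypothesis $t<w$, the case $r=0$). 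This yields $\delta(p,p^{n+1}-p-1)=\tfrac{1}{n(p-1)}$.

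For the second assertion, any minimal irreducible solution $U$ has density exactly $\tfrac{1}{n(p-1)}$, so $\ell=nw$, that is, $r=0$. If its support had $t<w$ jumps, Lemmas \ref{case2} and \ref{case3} would again apply and exclude this configuration, the only delicate point being the sporadic instance $p=3$, $n=2$, $s=2$ whose exhaustive check inside the proof of Lemma \ref{case3} shows that the solutions which actually occur there all have $t=w$ jumps. Hence $t=w$ in every case, which is the remaining assertion of the proposition.

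The bulk of the work is already carried out in Lemmas \ref{case1}--\ref{case3}, and the proposition itself is essentially a bookkeeping step. The only genuine obstacle, already handled inside Lemma \ref{case3}, is that the generic upper bounds coming from Proposition \ref{bou} and Lemma \ref{max1} are too weak to immediately kill the $n=2$, $p=3$ configuration with few jumps, so an explicit enumeration is needed at that one point; once this is done, the trichotomy (support with $w$ jumps, or $t<w$ jumps with some subsequence of length $\ge n+2$, or $t<w$ jumps with all lengths $\le n+1$) assembles into the clean statement of the proposition.
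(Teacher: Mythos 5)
Your proposal is correct and follows essentially the same route as the paper, whose proof of Proposition \ref{wgeom} is precisely the assembly of Lemmas \ref{case1}--\ref{case3} applied to an irreducible solution of density in $\left]\tfrac{2}{(2n+1)(p-1)},\tfrac{1}{n(p-1)}\right]$ written as $\ell=nw+r$ with $0\leq r<\tfrac{w}{2}$. Your explicit witness $u_{p^n-1}=1$ for the upper bound and your remark on the sporadic $p=3$, $n=2$, $s=2$ case (settled by the exhaustive check inside Lemma \ref{case3}) are exactly the points the paper leaves implicit, so there is no substantive difference.
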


It remains to write down the minimal solutions. From above, we are looking for the irreducible solutions of weight $w$ and length $\ell=nw$. 

\begin{proposition}
\label{minirr1}
Let $D$ be as above. Then the minimal irreducible solutions have weight $w\leq p$. 

The minimal solutions of weight $1$ are (up to shift) the $1\cdot n_0(p^n-1)=n_0(p^n-1)$ for $1\leq n_0\leq p-1$. For any $2\leq w\leq p-1$, the 
solutions of weight $w$ are, up to shift, of one of the two following types
\begin{itemize}
	\item[(i)] $\sum_{k=0}^{w-1} p^{nk}\cdot(n_{k+1}p^n-n_{k})=n_0\cdot(p^{nw}-1)$, where $n_w=n_0=\min\{n_k\}$, and all $n_k$ are pairwise distinct 
	elements in $\{1,\ldots,p-1\}$;
	\item[(ii)] $\sum_{k=0}^{w-3} p^{nk}\cdot(n_{k+1}p^n-n_{k})+p^{n(w-2)}\cdot(p^{n-1}n_{w-1}-n_{w-2})+p^{n(w-1)-1}\cdot(p^{n+1}-n_{w-1})=
	1\cdot(p^{nw}-1)$, where $n_0=1$, all $n_k$, $1\leq k\leq w-2$ are pairwise distinct elements of $\{2,\ldots,p-1\}$, and we have 
	$p+1\leq n_{w-1}\leq p^2-1$, $(p,n_{w-1})=1$.
\end{itemize}
When $w=p$, all solutions are of the second type from above.
\end{proposition}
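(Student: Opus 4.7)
By Propositions \ref{wgeom} and \ref{gapgeom}(iii), any minimal irreducible solution $U$ has length $\ell = nw$ and a support $\varphi_U$ possessing exactly $w$ jumps, each of which lies in $D$; moreover, $U$ is completely determined by $\varphi_U$. Decompose $\varphi_U$ into $w$ geometric runs of lengths $\ell_1,\ldots,\ell_w$ summing to $nw$, with initial terms coprime to $p$ and pairwise distinct (by irreducibility). Applying Lemma \ref{max1} with $M = \max D = p^{n+1}-p-1$ yields the pointwise bound $\varphi_U(i) \leq p^n+p^{n-1}+\cdots+p-1$, which forces $\ell_k \leq n+1$, with equality only when the corresponding initial term equals $1$. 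Hence at most one run has length $n+1$, and combining this with $\sum\ell_k = nw$ leaves exactly two admissible configurations: either all $\ell_k = n$ (Type~(i)), or exactly one run has length $n+1$ (initial $1$), exactly one has length $n-1$, and the remaining $w-2$ runs have length $n$ (Type~(ii)).

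For Type~(i), I would cyclically shift so that $n_0 = \min\{n_k\}$, label the runs $0,\ldots,w-1$ in cyclic order, and set $n_w := n_0$. The jump from run $k$ to run $k+1$ equals $d_k = p^n n_{k+1} - n_k$, and these must all lie in $D$; the identity $\sum_k p^{nk} d_k = n_0(p^{nw}-1)$ then holds by telescoping. The inequality $d_k \leq p^{n+1}-p-1$ together with $n_{k+1}\geq 1$ gives $n_k \leq p - (p+1-n_{k+1})/p^n$, which inductively (using $(n_k,p)=1$) confines every $n_k$ to $\{1,\ldots,p-1\}$, and distinctness then yields $w\leq p-1$. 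The weight-$1$ case is the degenerate sub-case: a single length-$n$ run with initial $n_0$ gives $u_d = 1$ for $d = n_0(p^n-1) \in D$, whence $n_0 \in \{1,\ldots,p-1\}$.

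For Type~(ii), I would shift so that the length-$(n+1)$ run comes first, followed by the length-$(n-1)$ run, and then by the $w-2$ length-$n$ runs; I label the initial terms cyclically as $1, n_{w-1}, n_{w-2}, \ldots, n_1$ so as to match the orientation of the formula. Computing the four types of inter-run jumps via the relation \eqref{saut} produces exactly the summands $p^{nk}(n_{k+1}p^n-n_k)$ for $k=0,\ldots,w-3$, together with $p^{n(w-2)}(p^{n-1}n_{w-1}-n_{w-2})$ and $p^{n(w-1)-1}(p^{n+1}-n_{w-1})$, and their telescoping sum equals $p^{nw}-1$. The constraints $p^{n+1}-n_{w-1}\in D$ and $p^{n-1}n_{w-1}-n_{w-2}\in D$ (using the bound $n_{w-2}\leq p-1$ obtained as in Type~(i)) respectively force $n_{w-1}\geq p+1$ and $n_{w-1}\leq p^2-1$, while the remaining jump bounds confine $n_1,\ldots,n_{w-2}$ to $\{2,\ldots,p-1\}$; pairwise distinctness then gives $w-2\leq p-2$, i.e.\ $w\leq p$. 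Since Type~(i) caps the weight at $p-1$, every weight-$p$ minimal solution is forced to be of Type~(ii).

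The most delicate point is the bookkeeping that matches each jump of $\varphi_U$ (indexed via \eqref{saut}) with its associated coefficient $d_k$ in the $p$-adic expansion of the solution, and in particular checking that the chosen cyclic orientation in Type~(ii) produces precisely the three-part formula of the statement. Once this correspondence is pinned down, all remaining estimates on the $n_k$ reduce to elementary inequalities on $p^n n_{k+1}-n_k \leq p^{n+1}-p-1$.
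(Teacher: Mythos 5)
Your overall strategy coincides with the paper's: reduce via Propositions \ref{wgeom} and \ref{gapgeom} to classifying supports of length $nw$ with exactly $w$ jumps lying in $D$, bound the run lengths by Lemma \ref{max1}, and split into the two length-configurations. The counting of configurations (all runs of length $n$, or one run of length $n+1$ with initial term $1$, one of length $n-1$, and $w-2$ of length $n$) is correct, and your treatment of Type (i), the weight-$1$ case, and the bounds $p+1\leq n_{w-1}\leq p^2-1$ are essentially the paper's.

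However, there is a genuine gap in your Type (ii) analysis: you write ``I would shift so that the length-$(n+1)$ run comes first, \emph{followed by} the length-$(n-1)$ run, and then by the $w-2$ length-$n$ runs.'' A shift is only a cyclic rotation; it cannot change the cyclic order of the runs, and nothing in your argument rules out a configuration in which one or more length-$n$ runs sit between the length-$(n+1)$ run and the length-$(n-1)$ run. Such a configuration would yield a solution of neither Type (i) nor Type (ii), so excluding it is an indispensable part of the classification, not bookkeeping. The paper proves this adjacency by a dedicated chain of estimates: with $n_1=1$, $\ell_1=n+1$, the jump $p^{n+1}-n_2\in D$ forces $n_2\geq p+1$; if the next run had length $n$, then $p^n n_2-n_3\in D$ forces $n_3\geq p^n+p+1$, which by Lemma \ref{max1} forces that run to have length $1$, and then the following initial term satisfies $n_4\geq (p+1)^2$, whose run (of length $n$, possibly wrapping around to $n_1=1$) violates the bound $\varphi_U(i)\leq(p^{n+1}-p-1)/(p-1)$ of Lemma \ref{max1}. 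Only after this contradiction can one conclude that the length-$(n-1)$ run immediately follows the length-$(n+1)$ run, which is exactly what the three-part formula of Type (ii) encodes. (A minor further point: the coprimality of the initial terms to $p$ is not a consequence of irreducibility, as you state, but of the fact that each jump lies in $D$ and is therefore prime to $p$; and your Type (i) inequality $n_k\leq p-(p+1-n_{k+1})/p^n$ has its indices reversed, though the intended cyclic maximum argument does close correctly and even makes the appeal to coprimality unnecessary there.)
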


\begin{proof}
Fix some integer $w\geq 1$; we are looking for all irreducible solutions of length $nw$ and weight $w$. From Proposition \ref{wgeom}, 
we are looking for solutions having a support with $w$ jumps. From Proposition \ref{gapgeom}, they are completely determined by their 
support $\varphi_U$, and we can focus on these last ones. Write such a support
$$n_1,\ldots,p^{\ell_1-1}n_1,\ldots,n_w,\ldots,p^{\ell_w-1}n_w$$
From Lemma \ref{max1}, all elements above must satisfy $\varphi_U(i)\leq \frac{p^{n+1}-p-1}{p-1}<p^n+p^{n-1}+\cdots+p$. As a consequence, any geometric subsequence of the support must have length $\ell_j\leq n+1$, and if one has length $\ell_j=n+1$, it is unique and has initial term $n_j=1$. We consider separately the minimal solutions, according to the existence, or not, of such a subsequence.

First assume (up to shift) that we have $\ell_1=n+1$, and thus $n_1=1$, and $\ell_i= n$ for any $2\leq i\leq w$, except one which is $n-1$. From Proposition \ref{gapgeom}, we must have $p^{n+1}-n_2\in D$, and we get $n_2\geq p+1$. 
	
If we assume $\ell_2=n$, we get $p^{\ell_2}n_2\geq p^{n+1}+p^n$, and $n_3\geq p^{\ell_2}n_2-(p^{n+1}-p-1)=p^n+p+1$. From Lemma \ref{max1}, we must have $\ell_3-1=0$, and again from Proposition \ref{gapgeom} we have $n_4\geq p(p^n+p+1)-(p^{n+1}-p-1)=(p+1)^2$. Finally we get $\ell_4=n$, and $p^{\ell_4-1}n_4=p^{n-1}n_4\leq p^n+p^{n-1}+\cdots+p$, a contradiction.

As a consequence, we must have $\ell_2=n-1$, and $\ell_3=\ldots=\ell_w=n$. Moreover, the integers $p^nn_w-1,p^nn_{w-1}-n_w,\ldots,p^nn_3-n_4$ all lie in $D$ from Proposition \ref{gapgeom}, and step by step we get $n_w,n_{w-1},\ldots,n_3 \in \{2,\ldots,p-1\}$, pairwise distinct since $U$ is assumed irreducible; in the same way, the integer $p^{n-1}n_2-n_3$ lies in $D$, and we get the inequality $n_2<p^2$ . This gives us all solutions of the second type above.

Now assume $\ell_i= n$ for any $1\leq i\leq w$; we can write the support
  $$n_1,\ldots,p^{n-1}n_1,\ldots,n_w,\ldots,p^{n-1}n_w$$
Write $n_1=\min\{n_k\}$ after shifting if necessary. From Lemma \ref{max1}, we must have $p^{n-1}n_k<p^n+p^{n-1}+\cdots+p$ for all $p$, and $n_k\leq p+1$. The case $n_k=p+1$ is impossible since then we shoud have $n_{k+1}\geq p^{n+1}+p^n-(p^{n+1}-p-1)=p^n+p+1$, and $p^{n-1}n_{k+1}>p^n+p^{n-1}+\cdots+p$, a contradiction with Lemma \ref{max1}. The case $n_k=p$ is also impossible, since then we would have $p^nn_{k-1}-p\in D$ from Proposition \ref{gapgeom}, and this number is divisible by $p$.
  
  Thus all $n_k$ are in $\{1,\ldots,p-1\}$, they must be pairwise distinct since $U$ is irreducible, and we get all solutions of the first type above.

\end{proof}

We have shown the first inequality of Theorem \ref{bounds} in the case $n\geq 2$ since any solution of density $\frac{1}{n(p-1)}$ has the form $\sum du_d$ where $u_d\geq 1$ for some $d\geq p^n-1$.

\subsection{The case \texorpdfstring{$d=p^{2}-p-1$}{pquatre}}

The remaining case is $D=\{1\leq i\leq p^2-p-1,~(i,p)=1\}$. Many of the results above remain valid, but not all; for instance the 
first assertion in Proposition \ref{wgeom} remains true, but not the second one. As a consequence, the solutions described in the beginning of
Proposition \ref{minirr1} remain minimal, but there are new ones.

The inequalities at the beginning of subsection \ref{pnplusone} remain valid: we get in the same way, for a minimal solution of length $\ell$ 
and weight $w$

$$\frac{2}{3(p-1)}<\delta(p,p^2-p-1)\leq \frac{1}{p-1},~w\leq 2p-2.$$

As a consequence, we have $\ell=w+r$, with $0 \leq r <\frac{w}{2}$.

We first consider Lemma \ref{case1}, and assume that the support of $U$ has $w$ jumps. Denote by $\ell_1,\ldots,\ell_w$ the lengths of the 
geometric subsequences with $\ell_1=\max\{\ell_i\}$. The maximal jump is at most $\max D=p^2-p-1$; from Lemma \ref{max1}, we obtain the 
inequality $p^{\ell_1-1}n_1\leq (p^2-p-1)/(p-1)<p$. As a consequence, we have $\ell_i=1$ for all $i$, and $r=0$. Thus an irreducible solution 
as above whose support has $w$ jumps verifies $\ell=w$.

Now consider Lemma \ref{case2}, and assume that the support of $U$ contains at least one subsequence of length at least $3$. Exactly as in the 
beginning of the proof of Lemma \ref{case2} (we do not use the hypothesis $n\geq 2$ there), we get at most one subsequence of length $3$, with 
first term $1=\varphi_U(0)$. In the same way, we have $\varphi_U(2)=p^2=\max\{\varphi_U(i)\}$, $w_1>p-1$, $s=w-t\geq p-1$, 
$t\leq w+1-p\leq p-1$ and $w_i\leq p-t$ for any $i>1$. From the last inequality, we deduce that for all $3\leq i<\ell$, 
$\varphi_U(i)\leq (p-t)(p-\frac{1}{p-1})$. As a consequence, if we have $\varphi_U(i)=p\varphi_U(i-1)$ (i.e. when $\varphi_U(i)$ is the 
second term of a geometric subsequence of length $2$), we must have $\varphi_U(i-1)<p-t$, and since $\varphi_U(i-1)>1$, we get at most 
$p-t-2$ such subsequences. We deduce the following upper bound for the length $\ell$

$$\ell\leq 3+2(p-t-2)+t-1-(p-t-2)$$

From the inequality $\ell=w+r=t+s=r\geq p-1+t+r$, we deduce $t+r\leq 1$. Since $t\geq 1$ from the construction, we must have $t=1$ and $r=0$; 
the support of $U$ must have length $3$, and be the geometric sequence $\varphi_U(i)=p^i$ for $0\leq i\leq 2$. Moreover we get $w=3$, and 
$p^3-1=p\varphi_U(2)-\varphi_U(0)\leq 3(p^2-p-1)$, a contradiction. Thus a minimal irreducible solution cannot have a support containing a 
geometric subsequence of length $3$.

We end with Lemma \ref{case3}, which is no longer true. Consider a solution $U$ with length $\ell=w+r$, weight $w$, whose support contains 
$t<w$ jumps. As in the proof, we set $s=w-t$; all jumps are at most $(s+1)\max D$, and Lemma \ref{max1} gives the upper bound 
$\varphi_U(i)\leq (s+1)(p-\frac{1}{p-1})<p(s+1)$. As a consequence, there are at most $s$ geometric subsequences of length $2$ 
in the support of $U$, and we get the inequality $\ell\leq 2s+t-s=t+s$. Now since we have $\ell=w+r=t+s+r$, we get $r=0$.

Note that in any case we conclude $r=0$; thus a minimal solution must have $\ell=w$, and we get

\begin{proposition}
\label{densp2}
The set $D=\{1\leq i\leq p^2-p-1,~(i,p)=1\}$ has $p$-density $\delta(p,p^2-p-1)=\frac{1}{p-1}$.
\end{proposition}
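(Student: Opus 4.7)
The plan is to prove the matching bounds $\delta(p,p^2-p-1) \leq \tfrac{1}{p-1}$ and $\delta(p,p^2-p-1) \geq \tfrac{1}{p-1}$ separately. The upper bound is witnessed by the length-$1$ solution with $u_{p-1}=1$ and $u_d=0$ otherwise: since $p-1\in D$ and $(p-1)\cdot 1\equiv 0\pmod{p-1}$, this lies in $E_{D,p}(1)$, has weight $s_p(1)=1$, and density $\tfrac{1}{p-1}$. For the lower bound, the standard opening argument from subsection \ref{pnplusone} applied with $n=1$ yields both $\delta(p,p^2-p-1)>\tfrac{2}{3(p-1)}$ and the weight bound $w\leq 2p-2$ for any minimal irreducible solution $U$, so writing $\ell=w+r$ one has $0\leq r<w/2$ and the goal becomes to show $r=0$.

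I would then run the three-case analysis parallel to Lemmas \ref{case1}--\ref{case3}, split by the number $t$ of jumps of the support $\varphi_U$. If $t=w$, each jump lies in $D$, so Lemma \ref{max1} gives $\varphi_U(i)\leq(p^2-p-1)/(p-1)<p$, forcing every geometric subsequence to have length $1$ and thus $\ell=w$. If $t<w$ and every subsequence has length at most $2$, set $s=w-t$; since the maximal jump is at most $(s+1)\max D$, Lemma \ref{max1} gives $\varphi_U(i)<p(s+1)$, bounding the number of length-$2$ subsequences by $s$, whence $\ell\leq 2s+(t-s)=w$. Both cases yield $r=0$.

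The main obstacle is the residual case in which $\varphi_U$ contains a geometric subsequence of length $\geq 3$, since the corresponding step in Lemma \ref{case2} made essential use of $n\geq 2$. Reworking that argument for $n=1$, the bounds $(p-1)|\varphi_U|\leq w(p^2-p-1)<2p^3-2$ together with $w\leq 2p-2$ force exactly one length-$3$ subsequence, with initial term $\varphi_U(0)=1$ and $\varphi_U(2)=p^2$. The first jump then exceeds $(p-1)p^2/(p^2-p-1)>p-1$, so $s:=w-t\geq p-1$, and the refined bound $\varphi_U(i)\leq(p-t)(p-\tfrac{1}{p-1})$ for $i\geq 3$ limits the length-$2$ subsequences to at most $p-t-2$, yielding $\ell\leq 3+2(p-t-2)+(t-1-(p-t-2))=p$. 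Combined with $\ell=t+s+r\geq t+p-1+r$, this collapses to $t+r\leq 1$, hence $t=1$ and $r=0$; the support reduces to the single length-$3$ sequence $\{1,p,p^2\}$, so $w=\ell=3$ and $s=2$, which with $s\geq p-1$ forces $p=3$. But then the unique jump $p^3-1=26$ exceeds the allowed $w\max D=3(p^2-p-1)=15$, the sought contradiction. This yields $r=0$ in every case, and hence $\delta(p,p^2-p-1)=\tfrac{1}{p-1}$.
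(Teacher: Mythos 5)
Your proposal is correct and follows essentially the same route as the paper: carry over the opening inequalities from the case $d=p^{n+1}-p-1$ (so $\ell=w+r$, $0\leq r<w/2$, $w\leq 2p-2$), then adapt the three-case analysis of Lemmas \ref{case1}--\ref{case3} to $n=1$ to force $r=0$, with the delicate length-$\geq 3$ subsequence case eliminated exactly as in the paper (single subsequence $1,p,p^2$, $t=1$, $r=0$, then the jump $p^3-1$ exceeds $3\max D$). The only cosmetic difference is that you first deduce $p=3$ from $s\geq p-1$ before reaching that numerical contradiction, whereas the paper applies the inequality $p^3-1\leq 3(p^2-p-1)$ directly for all odd $p$; both are fine.
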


We now describe the minimal irreducible solutions. Let $U$ denote such a solution, with length $\ell$ and weight $w=\ell$. From the calculations
preceding Proposition \ref{densp2}, if we denote by $\ell_1,\ldots,\ell_t$ the lengths of the geometric subsequences in the support of $U$, we 
have $\ell_i\leq 2$. If we set $s:=w-t$, we must have $s$ geometric subsequences of length $2$, and $t-s$ ones of length $1$. Moreover, the 
discussion in the paragraph preceding Proposition \ref{densp2} ensures the $s$ geometric subsequences are $\{i,pi\}$ for $1\leq i\leq s$.

Since we have $\max |\varphi_U|\geq ps$, the maximal
jump satisfies the inequality $\max\{j_i\}\geq p(p-1)s$. Since this jump is the sum of $w_r$ elements in $D$ for some $r$, we get the inequality
$w_r\geq \frac{p(p-1)}{\max D}s>s$. On the other hand, we know that exactly $t$ among the $w_r$ are non zero; from the equality 
$\sum_{r=1}^\ell w_r=w=t+s$ we deduce $\max\{w_r\}=s+1$, and $w_r\in\{0,1\}$ for all other $r$.

Assume $s\geq 2$. Up to shift, we can suppose that $\varphi_U(0)=1$, and $\varphi_U(k_0)=2$ for some $0<k_0<\ell$. We consider two cases, according
to the place where the maximal jump $w_{i_0}$ occurs. If it occurs for some $i\in \{0,\ldots,k_0-1\}$, then for any $k_0\leq i\leq \ell-1$, we have
$p\varphi_U(i)-\varphi_U(i+1)\leq p^2-p-1$, and $\varphi_U(i+1)\geq p\varphi_U(i)-p^2+p+1$. Since we have $\varphi_U(k_0+1)=2p$, we get an
increasing sequence, contradicting the assumption $\varphi_U(\ell)=\varphi_U(0)=1$. Else the maximal jump occurs for some 
$i\in \{k_0,\ldots,\ell-1\}$, and we get $\varphi_U(i+1)\geq p\varphi_U(i)-p^2+p+1$ for all $0\leq i<k_0$. From the equality $\varphi_U(1)=p$,
we get once again an increasing sequence, and a contradiction.

Thus we have $s\in \{0,1\}$; we treat separately the two cases.

First assume we have $s=0$. Then the support has $w$ jumps, and it determines completely the solution $U$ from Proposition \ref{gapgeom}. Write the 
support $n_1,\ldots,n_w$; Lemma \ref{max1} ensures we have $1\leq n_k\leq p-1$. Finally all solutions are of the first type given in Proposition
\ref{minirr1}.

If we have $s=1$, the support contains $w-1$ jumps, and, up to shift, we can write it $1,p,n_2,\ldots,n_{w-1}$. First remark that we have
$w_{r_0}=2$ for some $r_0$, and all other $w_r$ equal $1$. As a consequence, the maximal jump is $2(p^2-p-1)$, and Lemma \ref{max1} gives
$\max \varphi_U\leq 2p-1$.

If we have $w_{\ell-2}=1$ then we get $\varphi_U(2)\geq p^2-(p^2-p-1)=p+1$. If moreover $w_{\ell-3}=1$, then $\varphi_U(3)\geq p^2+p-(p^2-p-1)=2p+1$, 
contradicting the inequality above. Thus we must have $2\in \{w_{\ell-3},w_{\ell-2}\}$, and $w_r=1$ for any $0\leq r\leq \ell-4$. We treat the 
two cases separately.

First assume $w_{\ell-2}=2$. If we have $\ell=w=2$, then the solution must be $d_1+d_2=p^2-1$ for some $d_1,d_2\in D$. Assume $\ell\geq 3$; since we have
$w_r=1$ for any $0\leq r\leq \ell-3$, we deduce $pn_k-n_{k+1}\in D$ for all $2\leq k\leq \ell-1$. In particular, we have $pn_{\ell-1}-1\in D$, and
$n_{\ell-1}\in \{2,\ldots,p-1\}$. Reasoning (recursively) the same way for all $pn_k-n_{k+1}\in D$, we get $n_{k}\in \{2,\ldots,p-1\}$ for all 
$2\leq k\leq \ell-1$. Summing up, we deduce the solutions
$$p^{\ell-2}\left(d+(p^2-d-n_2)\right)+\sum_{k=2}^{\ell-1}p^{\ell-1-k}(pn_k-n_{k+1})=p^w-1$$
where $d$ and $p^2-d-n_2$ are both in $D$, $n_2,\ldots,n_{\ell-1}$ are pairwise distinct elements in $\{2,\ldots,p-1\}$, and $n_\ell=1$.

We now assume $w_{\ell-3}=2$. We must have both $\ell=w\geq 3$, and $n_2\geq p^2-(p^2-p-1)=p+1$.

When $\ell=w=3$, the support must be $1,p,n_2$, with $pn_2-1\leq 2(p^2-p-1)$, i.e. $n_2<2p-2$. In this way, we get all solutions of the form 
$$p(p^2-n_2)+d_1+d_2,~d_1,d_2\in D,~d_1+d_2=pn_2-1$$
When $\ell=w> 3$, we have $w_r=1$ for all $0<r<\ell-3$, and we deduce as above that $n_3,\ldots,n_{\ell-1}$ are pairwise distinct elements in
$\{2,\ldots,p-1\}$. From this support, we deduce the solutions 
$$p^{\ell-2}(p^2-n_2)+p^{\ell-3}(d_1+d_2)+\sum_{k=3}^{\ell-1}p^{\ell-1-k}(pn_k-n_{k+1})=p^w-1$$
where we have $p+1\leq n_2\leq 2p-2$, $d_1,d_2\in D$ and $d_1+d_2=pn_2-n_3$.

We have determined all minimal irreducible solutions

\begin{proposition}
\label{minirr2}
Let $p$ denote an odd prime, and $D:=\{1\leq i \leq p^2-p-1,~ (i,p)=1\}$. The minimal irreducible solutions have one of the following three forms
\begin{itemize}
\item[(i)] $\sum_{k=0}^{w-1} p^{k}\cdot(n_{k+1}p-n_{k})=n_0\cdot(p^{w}-1)$, where $n_w=n_0=\min\{n_k\}$, and all $n_k$ are pairwise distinct elements in $\{1,\ldots,p-1\}$;
\item[(ii)] $p^{\ell-2}\left(d+(p^2-d-n_2)\right)+\sum_{k=2}^{\ell-1}p^{\ell-1-k}(pn_k-n_{k+1})=p^w-1$
where $d$ and $p^2-d-n_2$ are both in $D$, $n_2,\ldots,n_{\ell-1}$ are pairwise distinct elements in $\{2,\ldots,p-1\}$, and $n_\ell=1$.
\item[(iii)] $p^{\ell-2}(p^2-n_2)+p^{\ell-3}(d_1+d_2)+\sum_{k=3}^{\ell-1}p^{\ell-1-k}(pn_k-n_{k+1})=p^w-1$ where we have else
\begin{itemize}
\item[($\alpha$)] $\ell=3$ and $p+1\leq n_2< 2p-2$;
\item[($\beta$)] $\ell\geq 4$, $p+1\leq n_2\leq 2p-2$, $d_1,d_2\in D$ with $d_1+d_2=pn_2-n_3$, $n_3,\ldots,n_{\ell-1}$ are pairwise elements in $\{2,\ldots,p-1\}$, and $n_\ell=1$.
\end{itemize}  
\end{itemize}
\end{proposition}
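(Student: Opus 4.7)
The discussion preceding the statement already pins down the coarse structure: any minimal irreducible solution $U$ satisfies $\ell = w$, its support decomposes into $t$ geometric subsequences of common ratio $p$ with lengths in $\{1,2\}$, and $s := w - t \in \{0,1\}$; when $s=1$, a shift places the unique length-$2$ subsequence at positions $\{0,1\}$ with value $\{1,p\}$. It remains to reconstruct $U$ explicitly in each case, using Proposition \ref{gapgeom}(iii) and relation (\ref{saut}).

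In the case $s=0$ the support has exactly $w$ jumps, each one an element of $D$, and it determines $U$ completely. Lemma \ref{max1} applied with $M=\max D=p^2-p-1$ yields $\varphi_U(i)<p$, so writing the support as $n_0,\ldots,n_{w-1}$ (with $n_w=n_0$), each $n_k$ lies in $\{1,\ldots,p-1\}$, and irreducibility forces them pairwise distinct. Reading off the jumps $pn_k-n_{k+1}\in D$ via (\ref{saut}) then produces exactly the solutions of type (i), after normalizing $n_0=\min\{n_k\}$.

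In the case $s=1$, write the support as $1,p,n_2,\ldots,n_{w-1}$. Since $\sum_r w_r=w=t+1$ with exactly $t$ of the $w_r$ nonzero, a unique index $r_0$ satisfies $w_{r_0}=2$, the other nonzero weights equalling $1$. The maximal jump is thus at most $2(p^2-p-1)$, and Lemma \ref{max1} gives $\max\varphi_U\le 2p-1$. I then show $r_0\in\{\ell-3,\ell-2\}$: were both $w_{\ell-2}$ and $w_{\ell-3}$ equal to $1$, iterating (\ref{saut}) from $\varphi_U(1)=p$ would force $\varphi_U(2)\ge p+1$ and then $\varphi_U(3)\ge 2p+1$, contradicting the bound above.

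For the subcase $w_{\ell-2}=2$, the weight-$2$ contribution at $r=\ell-2$ splits as $p^2-n_2=d+(p^2-d-n_2)$ with both summands in $D$, while unit weights at $r\le\ell-3$ enforce $pn_k-n_{k+1}\in D$; Lemma \ref{max1} then confines $n_2,\ldots,n_{\ell-1}$ to pairwise distinct elements of $\{2,\ldots,p-1\}$, producing type (ii). For $w_{\ell-3}=2$ one obtains $n_2\ge p+1$ and $pn_2-n_3=d_1+d_2$ with $d_1,d_2\in D$, together with the bound $n_2\le 2p-2$ and the usual recursion for the tail, giving type (iii); the short case $\ell=3$ must be handled separately since no recursion occurs, leaving only the constraint $p+1\le n_2<2p-2$. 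The main bookkeeping burden lies in cleanly separating these small-$\ell$ degeneracies from the generic recursive reconstruction.
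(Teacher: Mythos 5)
Your case analysis ($s=0$ giving type (i); $s=1$ with $w_{\ell-2}=2$ giving type (ii); $s=1$ with $w_{\ell-3}=2$ giving type (iii), with the $\ell=3$ degeneracy separated) follows the paper's own route, with the same tools (Lemma \ref{max1}, relation (\ref{saut}), Proposition \ref{gapgeom}(iii)), and those parts are essentially right. But there is a genuine gap at the very start: you take as given, as part of the ``coarse structure pinned down by the preceding discussion,'' that $s=w-t\in\{0,1\}$. What is actually available before this proposition (from the arguments proving Proposition \ref{densp2}) is only: $\ell=w$, every geometric subsequence has length at most $2$, and there are exactly $s$ length-$2$ subsequences whose initial terms are $1,\ldots,s$. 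The exclusion of $s\geq 2$ is part of the proof of this very proposition and is not a formality: the paper first shows that the maximal jump is at least $(p-1)\max\varphi_U\geq p(p-1)s$, hence some $w_{r_0}\geq s+1$, so in fact $w_{r_0}=s+1$ and every other nonzero $w_r$ equals $1$; then, for $s\geq 2$, locating the unique large jump relative to the positions where $\varphi_U$ takes the values $1$ and $2$ forces, via $\varphi_U(i+1)\geq p\varphi_U(i)-(p^2-p-1)$ on the arc avoiding that jump, a strictly increasing sequence that can never return to $1$ (or $2$), a contradiction. Without this step, solutions whose support contains two or more length-$2$ subsequences ($\{1,p\},\{2,2p\},\ldots$) are not ruled out, and the completeness of the list (i)--(iii) is unproven.

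Two smaller points. In the type (ii) reconstruction you attribute the confinement $n_2,\ldots,n_{\ell-1}\in\{2,\ldots,p-1\}$ to Lemma \ref{max1}, but that lemma only yields $\max\varphi_U\leq 2p-1$ here; the confinement comes from the backward recursion $pn_{\ell-1}-1\in D$, then $pn_k-n_{k+1}\in D$, exactly as you do use for type (iii), so this is an attribution slip rather than an error. Also, the degenerate case $\ell=w=2$ (solutions $d_1+d_2=p^2-1$) should be mentioned explicitly as the empty-recursion instance of type (ii), parallel to your separate treatment of $\ell=3$ in type (iii).
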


As a consequence of this proposition, we have proven the remaining case of Theorem \ref{bounds}, i.e. the first inequality in the case $n=1$.

\section{Some results about Artin-Schreier curves}
\label{sec4}

In this section, we study Artin-Schreier curves, i.e. $p$-cyclic covering of the projective line in characteristic $p$. We shall concentrate on $p$-rank $0$ such curves. From the Deuring Shafarevic formula \cite[Corollary 1.8]{crew}, they are ramified at exactly one point, and moving it to infinity, we deduce that such a curve, defined over $k=\F_q$, has an equation of the form
$$y^p-y=f(x)=\sum_{i=1}^d c_ix^i,~ c_i\in k$$
In the following, we denote this curve by $C_f$. Let us first describe its zeta function. It is well known that it has the form
$$Z(C_f,T)=\frac{L(C_f,T)}{(1-T)(1-qT)}$$
where $L(C_f,T)$ is a polynomial in $\Z[T]$ of degree two times the genus of the curve $C_f$, $2g(C_f)=(p-1)(d-1)$. We denote by $\NP_q(C_f)$ its $q$-adic Newton polygon, this is the Newton polygon of the curve $C_f$. It is a convex polygon with end points $(0,0)$ and $(2g,g)$, slopes in $[0,1]$, and symmetric in the sense that for any segment of length $\ell$ and slope $s$, there is a segment of length $\ell$ and slope $1-s$.

We now recall the link between exponential sums and Artin-Schreier curves. For any integer $r\geq 1$, we denote by $k_r=\F_{q^r}$ the degree $r$ extension of $k$ inside a fixed algebraic closure of $k$. We extend the additive character $\psi$ to $k_r$ with the help of the trace; in this way we obtain an additive character $\psi_{mr}:=\psi\circ \Tr_{k_r/k}$ of the field $k_r$. For any one variable polynomial $f\in k[x]$, we define a family $(S_r(f))_{r\geq 1}$ of exponential sums and the associated $L$-function in the following way
$$S_r(f):=\sum_{x\in k_r} \psi_{mr}(f(x)),~ L(f,T):=\exp\left(\sum_{r\geq 1} S_r(f)\frac{T^r}{r} \right)$$
The $L$-function is a polynomial of degree $d-1$ in $\Z[\zeta_p][T]$, and the polynomial $L(C_f,T)$ factors as
$$L(C_f,T)=\textrm{N}_{\ma{Q}(\zeta_p)/\ma{Q}}\left( L(f,T)\right)$$
 Since the prime $p$ is totally ramified in the extension $\Q(\zeta_p)/\Q$, we get the expression $\NP_q(C_f)=(p-1)\NP_q(f)$ for the Newton polygons, where $\NP_q(f)$ denotes the Newton polygon of the $L$-function $L(f,T)$. 

From Grothendieck's specialization theorem, there exists a polygon $\GNP(d,p)$, the generic Newton polygon, such that when $f$ varies among degree $d$ polynomials with coefficients in $\overline{k}$, the polygon $\NP_q(C_f)$ (here $\F_q$ is the field of definition of $f$) lies above $\GNP(d,p)$, and they are generically equal. From \cite{dens}, we can reinterpret the bounds in Theorem \ref{bounds} as the first slopes of the generic Newton polygons $\GNP(d,p)$.  

In order to prove Theorem \ref{supsing}, we have to be more precise; we show that when the degree has the form $i(p^n-1)$, $n\geq 1$, $1\leq i\leq p-1$, the first slope of the Newton polygon $\NP(f)$ is $\frac{1}{n(p-1)}$ \emph{for any} polynomial $f$. 

Finally, we compute the first vertex of the polygon $\GNP(d,p)$, and the corresponding Hasse polynomial; this is the polynomial in the coefficients of $f$ whose non vanishing ensures that $\NP_q(C_f)$ and $\GNP(d,p)$ share the same first vertex.

Our main tool is the $p$-adic congruence for the $L$-function associated to $f$ in terms of characteristic polynomials of semi-linear endomorphisms given in \cite{cong}. If we set $\delta:=\delta(d,p)$, it is a congruence "along the first slope", in the ring $M_\delta$ of power series having first slope at least $\delta$ modulo the ideal $I_\delta$ of power series having first slope greater than $\delta$. Explicitely, let $\Gamma:=(\gamma_i)_{1\leq i\leq d}$, where $\gamma_i$ is the Teichm\"{u}ller lifting of $c_i$; we have
\begin{equation}
\label{cong}
L(f,T)\equiv \det\left(\textbf{I}-\pi^{m(p-1)\delta}TM(\Gamma)^{\tau^{m-1}}\cdots M(\Gamma)\right) \mod I_{\delta}
\end{equation}
where $M(\Gamma)$ is a matrix whose coefficients are in the ring $\Z_p[\Gamma]$, and the number $\pi$ is the solution of the equation $X^{p-1}+p=0$ in the $p$-adic ring $\Z_p[\zeta_p]$ defined by $\psi(1)\equiv 1+\pi \mod \pi^2$. Since the matrix $M(\Gamma)$ can be written explicitely once one knows the minimal irreducible solutions, we can conclude.

 We first give some elementary results about such endomorphisms of a finite dimensional vector space over a finite field; this allows us to give the degree of the polynomial in the right hand side of the above congruence in the generic case. Then we prove that in certain cases this degree cannot be zero, which guarantees the non-existence of supersingular Artin-Schreier curves for certain genera when $\delta<\frac{1}{2}$. Finally, since the length of the first slope depends on the degree of the characteristic polynomial above, and the Hasse polynomial is its leading coefficient, we are able to give the Hasse polynomial for the first vertex of the generic Newton polygon for any degree $d$.
 
 \medskip
 
{\bf Notation :} in the following, we denote by $\{f^i\}_j$ the degree $j$ coefficient of the polynomial $f^i$.

\subsection{Semi-linear endomorphisms}

We first give a technical result on the characteristic polynomials of iterates of a semi-linear endomorphism, that we use below; this result must be well-known, but we give a proof here since we did not find any appropriate reference. Note the idea is already present in \cite{ko}, where the author considers Hasse Witt matrices.

Set $k:=\F_q$, $q=p^m$, and denote by $\sigma$ the generator of Gal$(k/\F_p)$ that raises the elements to the $p$-power. We consider the $\sigma$-linear endomorphism $\varphi$ of $V=k^N$ having matrix $^tA$ with respect to some basis. Then the matrix of $\varphi^m$, which is $k$-linear, is $^tB$, where $B=A^{\sigma^{m-1}}\cdots A$; we get the equality 
$$\det\left(\I-TA^{\sigma^{m-1}}\cdots A\right)=\det\left(Id-T\varphi^m\right)$$

We follow \cite{ka2}, and define the subspaces $V_{ss}:=\cap_{n\geq 1} \Ima\varphi^n$, $V_{nil}:=\cup_{n\geq 1} \Ker\varphi^n$. We have the direct sum $V=V_{ss}\oplus V_{nil}$; let $\{v_1,\ldots,v_{s_0}\}$ denote a basis of $V_{ss}$. The subspaces $V_{ss}$ and $V_{nil}$ are stable by $\varphi$. We show

\begin{lemma}
\label{semilincharpol}
Assume that we have $V_{ss}\neq 0$; let $A_{ss}$ denote the matrix for the restriction of $\varphi$ to $V_{ss}$ relative to the basis $\{v_1,\ldots,v_{s_0}\}$. We have the equality of polynomials
$$\det\left(\I-TA^{\sigma^{m-1}}\cdots A\right)=\det\left(\I-TA_{ss}^{\sigma^{m-1}}\cdots A_{ss}\right).$$
Moreover this polynomial has degree $\dim_k V_{ss}$ and leading coefficient $\textrm{N}_{k/\ma{F}_p}(\det A_{ss})$.
\end{lemma}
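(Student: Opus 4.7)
The plan is to exploit the Fitting-type decomposition $V=V_{ss}\oplus V_{nil}$, verify that both summands are $k$-subspaces stable under $\varphi$, and analyze the two blocks separately.

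First I would check that $V_{ss}=\cap_n\Ima\varphi^n$ and $V_{nil}=\cup_n\Ker\varphi^n$ really are $k$-subspaces stable under $\varphi$: although $\varphi^n$ is only $\sigma^n$-linear, its kernel and image are $k$-subspaces because $\sigma$ is an automorphism of $k$, and stability under $\varphi$ is immediate from the definitions. Once this is settled, I would pick a basis of $V$ adapted to the decomposition, extending $\{v_1,\ldots,v_{s_0}\}$ by a basis of $V_{nil}$. In this basis the matrix of $\varphi$ is block-diagonal, and hence so is the matrix of the $k$-linear endomorphism $\varphi^m$; invariance of the characteristic polynomial under change of basis then yields
$$\det\left(\I-TA^{\sigma^{m-1}}\cdots A\right)=\det\left(\I-TA_{ss}^{\sigma^{m-1}}\cdots A_{ss}\right)\cdot\det\left(\I-TA_{nil}^{\sigma^{m-1}}\cdots A_{nil}\right).$$

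Next I would kill the $V_{nil}$ factor. Since $V_{nil}$ is finite-dimensional, the increasing chain $\Ker\varphi\subseteq\Ker\varphi^2\subseteq\cdots$ stabilizes, so $\varphi^{n_0}=0$ on $V_{nil}$ for some $n_0$, and $\varphi^m|_{V_{nil}}$ is nilpotent. Factoring $\det(\I-TM)=\prod(1-\lambda_i T)$ over an algebraic closure, all $\lambda_i$ vanish, so this factor equals $1$.

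For $V_{ss}$, the dual remark is that the decreasing chain of images stabilizes, so $V_{ss}=\Ima\varphi^N$ for some $N$, and hence $\varphi(V_{ss})=\Ima\varphi^{N+1}=V_{ss}$. Thus $\varphi|_{V_{ss}}$ is surjective on a finite-dimensional space, hence bijective, so $\det A_{ss}\neq 0$; this gives degree exactly $s_0=\dim_k V_{ss}$. For the leading coefficient, $\det$ commutes with the entrywise Frobenius, so
$$\det\left(A_{ss}^{\sigma^{m-1}}\cdots A_{ss}\right)=\prod_{i=0}^{m-1}(\det A_{ss})^{\sigma^i}=\No_{k/\F_p}(\det A_{ss}),$$
which is the announced leading coefficient (up to the sign $(-1)^{s_0}$ built into the convention $\det(\I-TM)$). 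The proof is elementary; the only point requiring care is the bookkeeping around $\sigma$-linearity, namely verifying that the standard Fitting decomposition and the block form of $\varphi$ really do survive when $\varphi$ is only semi-linear rather than linear.
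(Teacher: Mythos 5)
Your proposal is correct and follows essentially the same route as the paper: both rest on the Fitting decomposition $V=V_{ss}\oplus V_{nil}$, its stability under $\varphi$, a block form in an adapted basis, the vanishing contribution of the nilpotent block, and the bijectivity of $\varphi$ on $V_{ss}$ to get the degree and leading coefficient. The only cosmetic differences are that the paper makes the nilpotent block strictly upper triangular via the kernel filtration instead of invoking eigenvalues over an algebraic closure, and your remark about the sign $(-1)^{s_0}$ in the leading coefficient is a fair (harmless) precision the paper glosses over.
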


\begin{proof}
Consider the following sequence of vector subspaces of $V$ 
$$\Ker\varphi\subsetneq\Ker\varphi^2\subsetneq\ldots\subsetneq\Ker\varphi^t=V_{nil}$$
For any $0\leq i\leq t-1$, let $\{v_{s_i+1},\ldots,v_{s_{i+1}}\}$ be a basis for a complementary subspace of $\Ker\varphi^i$ in $\Ker\varphi^{i+1}$. From the construction, the family $\{v_1,\ldots,v_{s_t}\}$ is a basis for the $k$-vector space $V$, and the matrices of $\varphi$ and $\varphi^m$ in this basis have the block forms
$$\left(\begin{array}{cc}
A_{ss}  & 0  \\
0  & T_1 \\
\end{array}
\right)
\textrm{ and }
\left(\begin{array}{cc}
A_{ss}\cdots A_{ss}^{\sigma^{m-1}}   & 0  \\
0  & T_2 \\
\end{array}
\right)$$

where the $T_i$ are strictly upper triangular matrices. We get the equalities of the reciprocals of the characteristic polynomials since a matrix and its transpose have the same one.

The last assertions follow from the fact that $A_{ss}$ is an invertible matrix; actually the definition of $V_{ss}$ ensures that the restriction of $\varphi$ to this subspace is surjective, thus an isomorphism.
\end{proof}

From this result and the congruence above, we deduce the first vertex of the Newton polygons interms of the invariants defined above

\begin{corollary}
\label{firstvertex}
Let $\overline{M}(\Gamma)$ denote the reduction modulo $p$ of the matrix $M(\Gamma)$, and $V_{ss}$ the space associated to $\overline{M}(\Gamma)$ as above. Assume $V_{ss}\neq \{0\}$; then the first vertex of the Newton polygon $\NP_q(f)$ is $(\dim V_{ss},\delta\dim V_{ss})$, and the first vertex of the Newton polygon $\NP_q(C_f)$ is $((p-1)\dim V_{ss},(p-1)\delta\dim V_{ss})$.
\end{corollary}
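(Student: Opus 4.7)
The strategy is to read off the first vertex of $\NP_q(f)$ directly from the congruence (\ref{cong}) by analyzing the right-hand side modulo $p$ via Lemma \ref{semilincharpol}. Set $N := M(\Gamma)^{\tau^{m-1}} \cdots M(\Gamma)$, a matrix with entries in $\Z_p[\Gamma]$, and expand
$$\det\bigl(\I - \pi^{m(p-1)\delta}\, T\, N\bigr) = \sum_{k \geq 0} (-1)^k\, \pi^{m(p-1)\delta k}\, e_k(N)\, T^k.$$
The defining relation $\pi^{p-1}+p=0$ combined with $v_q(p)=1/m$ yields $v_q(\pi^{m(p-1)\delta k}) = \delta k$. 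Since every $e_k(N) \in \Z_p[\Gamma]$ has non-negative $q$-adic valuation, the $k$-th coefficient has $q$-adic valuation at least $\delta k$, with equality if and only if $e_k(N)$ is a unit modulo $p$.

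Reducing modulo $p$ and observing that $\tau$ reduces to the Frobenius $\sigma$ on $\F_q$, the image of $N$ is $\overline{N} := \overline{M}(\Gamma)^{\sigma^{m-1}} \cdots \overline{M}(\Gamma)$. The largest $k_0$ for which $e_{k_0}(\overline{N})\neq 0$ is precisely the degree of the reduced polynomial $\det(\I - T\overline{N}) \in \F_q[T]$. Applying Lemma \ref{semilincharpol} with $A = \overline{M}(\Gamma)$, this degree equals $\dim_{\F_q} V_{ss}$. Under the hypothesis $V_{ss}\neq 0$, the Newton polygon of the right-hand side of (\ref{cong}) therefore has the segment from $(0,0)$ to $(\dim V_{ss},\, \delta\dim V_{ss})$ as its initial edge, with $(\dim V_{ss},\, \delta\dim V_{ss})$ as a vertex.

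Finally, (\ref{cong}) asserts that the difference between $L(f,T)$ and this determinant lies in $I_\delta$, the ideal of series whose first slope is strictly greater than $\delta$; coefficient-wise this means that if two coefficients at degree $k$ differ, the difference has valuation strictly greater than $\delta k$. Consequently the Newton polygons of the two agree on every segment of slope at most $\delta$, which identifies the first vertex of $\NP_q(f)$ as claimed. The statement for $\NP_q(C_f)$ then follows from the scaling $\NP_q(C_f) = (p-1)\NP_q(f)$ noted at the beginning of the section. The main subtlety is precisely this final transfer: one must verify that congruence modulo $I_\delta$ preserves the entire portion of the Newton polygon of slope at most $\delta$, which is essentially built into the definitions of $M_\delta$ and $I_\delta$ from \cite{cong} but still requires careful bookkeeping.
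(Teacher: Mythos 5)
Your proposal is correct and follows essentially the same route as the paper: read the coefficients of the right-hand side of the congruence (\ref{cong}) modulo $p$, use Lemma \ref{semilincharpol} to identify the largest index whose coefficient attains valuation exactly $\delta k$ as $\dim V_{ss}$, and then transfer to $\NP_q(C_f)$ by the dilation factor $p-1$. The paper states this more tersely (writing $b_i=\pi^{m(p-1)\delta i}b_i'$ with $\overline{b_i'}=a_i$), but the content and the key inputs are identical.
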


\begin{proof}
Write $\det(\textbf{I}-T\overline{M}(\Gamma)^{\sigma^{m-1}}\cdots \overline{M}(\Gamma)):=\sum_{i=0}^k a_iT^i$ in $\F_p[T]$, with $k=\dim V_{ss}$ from the above Lemma. The congruence means that if we write $L(f,T)=\sum_{i=0}^{d-1} b_iT^i$, we have $b_i=\pi^{m(p-1)\delta i}b_i'$ where the reduction modulo $p$ of $b_i'$ is $a_i$. As a consequence the coefficient with highest degree such that $v_q(b_i)=\delta i$ is the coefficient of degree $k$.

The assertion about $\NP(C_f)$ follows directly from the fact that this last Newton polygon is the dilation of $\NP_q(f)$ by a factor $p-1$.
\end{proof}

\subsection{Proof of Theorem \ref{supsing}}
\label{prove}

We are ready to prove the result about supersingular curves. Note that the case $p=2$ is \cite[Theorem 1.2]{sz}.

\begin{proof}
First note that a supersingular curve must have $p$-rank $0$. Now from the Deuring Shafarevic formula \cite[Corollary 1.8]{crew}, a $p$-cyclic covering of the projective line having $p$-rank $0$ must have exactly one ramification point. Moving it to infinity, we can assume that such a curve has a model of the form 
$$y^p-y=f(x)$$
where $f(x):=\sum_{j=0}^d c_jx^j$ is a degree $d$ polynomial from the Hurwitz' formula in positive characteristic.

In order to prove the Theorem, and from the description of the numerator of the zeta function of the curve in the Introduction, it is sufficient to show that the $q$-adic Newton polygon of the polynomial $L(f,T)$ has first slope $\frac{1}{n(p-1)}$ for any $f$. From corollary \ref{firstvertex}, we are reduced to show that the subspace $V_{ss}$ associated to the matrix $M(\Gamma)$ is never trivial.

We first treat the case $n\geq 2$.  We apply Proposition \ref{minirr1}. The minimal irreducible solutions for the set $D=\{1,\ldots,d\}$ cannot be of type (ii): since we have $p^{n+1}-(p^2-1)>(p-1)(p^n-1)$, there is no element in $D$ of the form $p^{n+1}-n_{w-1}$ with $p+1\leq n_{w-1}\leq p^2-1$. Thus all are of type (i). Moreover, the integer $p^n n_k-n_{k+1}$ is in $D$ if, and only if $n_k<i$, or $n_{k+1}\geq n_k=i$. As a consequence, the minimal irreducible solutions are the ones described in (i) of the above Proposition, with $1\leq n_0,\ldots,n_{w-1} \leq i-1$, and support $n_0,\ldots,p^{n-1}n_0,n_{w-1},\ldots,p^{n-1}n_{w-1},\ldots,n_{1},\ldots,p^{n-1}n_1$, and the solution $1\cdot i(p^n-1)$, with support $i,ip,\ldots,ip^{n-1}$. We deduce the minimal support \cite[Definition 2.10]{cong} associated to
$p$ and $D$; it is the set $\{kp^j,~ 1\leq k\leq i,~ 0\leq j\leq n-1\}$.

In order to use congruence (\ref{cong}), we now have to consider the base $p$ digits of these solutions. 

Concerning the last one, the solution is $U=(u_d)_D$, where $u_{i(p^n-1)}=1$, and all other are zero. We get the vectors $V={\bf 1}_{i(p^n-1)}\in V(p^{n-1}i,i)\subset\{0,\ldots,p-1\}^{|D|}$ with coordinates $1$ at the $i(p^n-1)$th place and $0$ elsewhere, and ${\bf 0} \in V(p^{j-1}i,p^ji)\subset\{0,\ldots,p-1\}^{|D|}$ for any $0\leq j\leq n-1$. As explained above, any other solution has the form described in Proposition \ref{minirr1} (i) with $n_0,\ldots,n_{w-1}$ pairwise distinct in $\{1,\ldots,i-1\}$. In other words, we have $U=(u_d)$ where $u_d=p^{nk}$ for $d=n_{k+1}p^n-n_{k}$, and $0$ else, and its base $p$ digits are the vectors $V={\bf 1}_{n_{k+1}p^n-n_k}\in V(p^{n-1}n_{k+1},n_k)\subset\{0,\ldots,p-1\}^{|D|}$ for any $0\leq k\leq w-1$ and the zero vector ${\bf 0} \in V(p^{j-1}n_{k},p^jn_k)\subset\{0,\ldots,p-1\}^{|D|}$ for any $1\leq j\leq n-1$, $0\leq k\leq w-1$.

Summing up, we have completely determined the sets defined in \cite[Definition 2.13]{cong}
$$V(kp^j,k'p^{j'})=\left\{
\begin{array}{rcl}
\{ {\bf 1}_{k'p^n-k}\}       & \textrm{when} & j=0,~j'=n-1,~\textrm{ and } 
\left\{ 
\begin{array}{c}
1\leq k,k'\leq i-1 \\
\textrm{ or } \\
k=k'=i \\
\end{array}\right.\\
\{{\bf 0}\} & \textrm{when} & (k,j)=(k',j'+1) \\
\emptyset & \textrm{else} & \\
\end{array}
\right.$$

From \cite[Definition 3.6]{cong}, we deduce the matrix $\overline{M}(\Gamma)$. We write it with respect to the basis $\{e_{kj},~ 1\leq k\leq i,~ 0\leq j\leq n-1\}$ in lexicographic order, where $e_{kj}$ corresponds to the integer $kp^j$ in the minimal support. It is the $in\times in$ matrix consisting of the $n\times n$ blocks $M_{ab}$, $1\leq a,b\leq i$, where we have $M_{aa}=A(c_{a(p^n-1)})$, $M_{ab}=B(c_{ap^n-b})$ for $a\neq b$ in $\{1,\ldots,i-1\}$, and $M_{ai}=M_{ib}=\textbf{O}_n$ for $1\leq a,b \leq i-1$; note that we have set

$$A(c):=
\left(\begin{array}{cc}
0 & \textbf{I}_{n-1} \\
c & 0 \\
\end{array}\right),~ 
B(c):=
\left(\begin{array}{cc}
0 & \textbf{O}_{n-1} \\
c & 0 \\
\end{array}\right)
$$

We deduce that the subspace $W$ of $V$ generated by the last $n$ vectors $e_{ij},~ 0\leq j\leq n-1$ is stable by $\varphi$, and moreover, since the (leading) coefficient $c_{i(p^n-1)}$ cannot be zero, that $\varphi_{|W}$ is bijective. Thus $W$ is a subspace of $V_{ss}$, we have $V_{ss}\neq 0$, and the congruence is always non trivial. This ends the proof in the case $n\geq 2$.

We consider the case $n=1$ (and $p\geq 5$ here in order to have $n(p-1)\geq 3$). Since $i(p-1)\leq (p-1)^2<p^2-(2p-2)$, we cannot have solutions of type (iii) from Proposition \ref{minirr2}. When $i\leq\frac{p-1}{2}$, we no longer have solutions of type (ii) since they use an integer $d$ such that $d$ and $p^2-d-n_2\geq p^2-d-p+1$ are both in $D$. In this case all solutions are of type (i): we get the matrix with coefficients $c_{pa-b}$ for $1\leq a,b \leq i-1$, $c_{(p-1)i}$ when $a=b=i$, and $0$ else; then we conclude exactly the same way as in the case $n\geq 2$. Assume now we have $i\geq\frac{p+1}{2}$. In addition to type (i) solutions, there are new solutions, of type (ii), of the form 
$$p^{\ell-2}\left(d+(p^2-d-n_2)\right)+\sum_{k=2}^{\ell-1}p^{\ell-1-k}(pn_k-n_{k+1})=p^\ell-1$$
where $d$ and $p^2-d-n_2$ are both in $D$, $n_2,\ldots,n_{\ell-1}$ are pairwise distinct elements in $\{2,\ldots,i-1\}$, and $n_\ell=1$. Such a solution has support $(1,p,n_2,\ldots,n_{\ell-1})$. We deduce that the minimal support is $\{1,2,\ldots,i,p\}$. In addition to the $V(k,k')=\{{\bf 1}_{pk-k'}\}$ as above, we get $V(1,p)=\{{\bf 0}\}$, $V(k,p)=\emptyset$ for $k\geq 2$, and $V(p,k)=\{{\bf 1}_{d}+{\bf 1}_{p^2-d-k},~ d \textrm{ and } p^2-d-k\in D\}$. The matrix $\overline{M}(\Gamma)$ is 
$$\overline{M}(\Gamma)=
\left(\begin{array}{ccc|cc}
 &  &       & 0 & 1       \\
 & (c_{pa-b}) & & \vdots  & 0 \\
 &  &  & 0 & \vdots  \\
 \hline
0 & \ldots & 0 & c_{(p-1)i} & 0 \\
\frac{1}{2}\left\{f^2\right\}_{p^2-1} & \ldots & \frac{1}{2}\left\{f^2\right\}_{p^2-i+1} & 0 & 0 \\
\end{array}\right)$$
Once again, the semi-linear map acts bijectively on the subspace generated by the last but one vector since the coefficient $c_{i(p-1)}$ cannot be zero, and we conclude as above.
\end{proof}

\subsection{Hasse polynomials for the first vertex}
\label{hasse}

One can also use the matrix $M(\Gamma)$ to give the first vertex of the polygon $\GNP(d,p)$, and the associated Hasse polynomial. We do this in this subsection; we write down explicitely the semi-linear morphism $\varphi$ having matrix $\overline{M}(\Gamma)$, then we determine its semi-simple part in order to apply Corollary \ref{firstvertex}.

\subsubsection{$ d\leq p-2$} The case $d< \frac{p-1}{2}$ is \cite[Theorem 1.1]{sz2}. Then the first vertex of the generic Newton polygon is $(p-1,\lceil\frac{p-1}{d}\rceil)$, with Hasse polynomial $\left\{f^{\lceil\frac{p-1}{d}\rceil}\right\}_{p-1}$. The result of Proposition \ref{cas1} (ii) shows that this result extends to the range $\frac{p-1}{2}\leq d < p-2$.

In the case $d=p-2$, we can assume $p\geq 7$, since the curve is rational for $p=3$, and supersingular for $p=5$. From Proposition \ref{cas1} (ii), the minimal support is $\{1,2\}$, and the matrix $\overline{M}(\Gamma)$ can be written in the following way
$$\left(\begin{array}{cc}
\frac{1}{2}\left\{f^{2}\right\}_{p-1} & c_{p-2} \\
\frac{1}{6}\left\{f^{3}\right\}_{2p-1} & 0 \\
\end{array}\right)$$
We deduce that the first vertex of the generic Newton polygon $\GNP(p-2,p)$ is $(2(p-1),4)$, with Hasse polynomial $c_{p-2}\left\{f^{3}\right\}_{2p-1}$. Moreover, when this polynomial vanishes, the first vertex of $\NP(C_f)$ is $(p-1,2)$ exactly when we have $\left\{f^{2}\right\}_{p-1}\neq 0$.

\subsubsection{$p^{n}-1\leq d\leq p^{n+1}-p^2-1$, $n\geq 2$} Let us treat the case $d=i(p^n-1)$, $n\geq 2$; we have already calculated the matrix $\overline{M}(\Gamma)$ in this case. It turns out that the matrix has (up to sign) the same determinant as the $i\times i$ matrix having coefficients $m_{ab}=c_{p^na-b}$ when $a,b \in \{1,\ldots,i-1\}$ or $a=b=i$,  and $m_{ab}=0$ else. This is easily seen when developing the determinant successively with respect to the lines $L_i$, $i$ running over the integers not multiple of $n$.

We deduce that when this determinant (say $H(\Gamma)$) is non zero, the matrix $\overline{M}(\Gamma)$ is invertible, and we have $V=V_{ss}$; the characteristic polynomial has degree $\dim V=ni$, and the first vertex of the Newton polygon of the $L$-function is $(ni,\frac{i}{p-1})$. 

As a consequence, the first vertex of the generic Newton polygon $\GNP(i(p^n-1),p)$ is $((p-1)ni,i)$, with Hasse polynomial 
$$H(\Gamma)=c_{i(p^n-1)} \det\left(c_{p^na-b}\right)_{1\leq a,b \leq i-1}$$

When we have $d=ip^n-1$, $n\geq 2$, we have to replace the matrix above by $\left(c_{p^na-b}\right)_{1\leq a,b \leq i}$. When $d=ip^n-t$, $1\leq t<i$, we get the same matrix, with zeroes in place of the coefficients $c_{p^ni-b}$, $1\leq b<t$.

When $ip^n-1\leq d<(i+1)(p^n-1)$, $1\leq i\leq p-2$, all minimal irreducible solutions already appear when $d=i(p^n-1)$, and we get the same result as in the case $d=ip^n-1$. Finally, for $(p-1)p^n-1\leq d\leq p^{n+1}-p^2-1$, we get the same result as in the case $d=(p-1)p^n-1$.

\subsubsection{$p-1\leq d\leq (p-1)^2$} As we have seen in the proof of Theorem \ref{supsing}, when $d\leq \frac{p-1}{2}p-1$, the only solutions in Proposition \ref{minirr2} are of type (i), and we get the same result as in the case $p^n-1\leq d\leq p^{n+1}-p^2-1$, $n\geq 2$ above, namely that the Hasse polynomial is the determinant of the $i\times i$ matrix consisting of the coefficients $c_{pa-b}$ for $pa-b\leq d$ when $i(p-1)\leq d<(i+1)(p-1)$.

When $d\geq \frac{p-1}{2}p+1$, solutions of type (ii) appear, but not of type (iii); as in the proof of the Theorem, we get a new matrix, generically invertible, of size $(i+1)\times (i+1)$ when $i(p-1)\leq d<(i+1)(p-1)$, namely
$$\overline{M}(\Gamma)=
\left(\begin{array}{ccc|cc}
 &  &       & c_{p-i} & 1       \\
 & (c_{pa-b}) & & \vdots  & 0 \\
 &  &  & c_{p(i-1)-i} & \vdots  \\
 \hline
c_{pi-1} & \ldots & c_{pi-i+1} & c_{(p-1)i} & 0 \\
\frac{1}{2}\left\{f^2\right\}_{p^2-1} & \ldots & \frac{1}{2}\left\{f^2\right\}_{p^2-i+1} & \frac{1}{2}\left\{f^2\right\}_{p^2-i} & 0 \\
\end{array}\right)$$
and whose determinant is the Hasse polynomial.

\subsubsection{$p^{n+1}-p^2+1\leq d\leq p^{n+1}-p-1$, $n\geq 2$} We first consider the case $d=p^{n+1}-p-1$, $n\geq 2$. From Proposition \ref{minirr1}, the minimal support consists of the integers $ip^k$, $1\leq i\leq p-1$, $0\leq k\leq n-1$, of $p^n$, and of the $ip^k$, $p+1\leq i\leq p^2-1$ prime to $p$, $0\leq k\leq n-2$. The semi-linear map $\varphi$ acts on the associated basis by $\varphi(e(ip^k))=e(ip^{k+1})$ for $1\leq i\leq p-1$, $0\leq k\leq n-2$ and $p+1\leq i\leq p^2-1$ prime to $p$, $0\leq k\leq n-3$. Moreover we have
$$f(e(ip^{n-1}))=\sum_{j=1}^{p-1} c_{p^ni-j}e(j)\textrm{ for } i\geq 2,~ f(e(p^{n-1}))=\sum_{j=1}^{p-1} c_{p^n-j}e(j)+e(p^n)$$
$$f(e(p^n))=\sum_{i=p+1}^{p^2-1} c_{p^{n+1}-i}e(i),~f(e(ip^{n-2}))=\sum_{j=1}^{p-1} c_{p^{n-1}i-j}e(j)\textrm{ for } p+1\leq i\leq p^2-1$$
where $i$ has to be prime to $p$ in the last line.

Here we consider the subspave $V_1$ generated by the $e(ip^k)$, $1\leq i\leq p-1$, $0\leq k\leq n-1$, and the $\varphi^k(e(p^n))$, $0\leq k\leq n-1$. For any vector of the basis $e(i)$, one of its iterates under the action of $\varphi$ falls on $V_1$; this is clear for most of them, and we compute
$$\varphi^n(e(p^n))=\sum_{j=1}^{p-1} \left(\sum_{i=p+1}^{p^2-1} c_{p^{n+1}-i}^{p^{n-1}}c_{p^{n-1}i-j}\right)e(j):=\sum_{j=1}^{p-1}\theta_je(j)$$
which proves the assertion for the last $n$ vectors spanning $V_1$. We deduce that $V_{ss}$ is a subspace of $V_1$. Writing the matrix of the restriction of $\varphi$ in this basis, we get the $np\times np$ matrix consisting of the $n\times n$ blocks $M_{ab}$, $1\leq a,b\leq p$, defined by (notations are the same as in the preceding subsection)

$$M_{aa}=A(c_{ap^n-a}),~ 
M_{ab}=B(c_{ap^n-b}),~M_{pb}=B(\theta_b)\textrm{ for } 1\leq a,b\leq p-1,~ $$
and finally $M_{1p}=B(1)$, $M_{ap}={\bf 0}_n$ for $2\leq a\leq p-1$, and $M_{pp}=A(0)$.

Developping the determinant of this matrix with respect to the lines $L_i$, $i$ running over the integer not divisible by $n$, then by the last column of the remaining matrix, we see that its determinant is the same as the one of the following $(p-1)\times(p-1)$ matrix 

$$\left(
\begin{array}{ccc}
c_{2p^n-1} & \ldots & c_{2p^n-p+1}\\
\vdots & & \vdots\\
c_{(p-1)p^n-1} & \ldots & c_{(p-1)(p^n-1)} \\
\theta_1 & \ldots & \theta_{p-1}\\
\end{array}
\right)$$
This last determinant is generically non trivial (it is a non zero polynomial in the coefficients of the polynomial $f$), and as above we deduce that $V_{ss}=V_1$ exactly when it does not vanish, and this space has dimension $np$. Thus the first vertex of the generic Newton polygon is the point $(n(p-1)p,p)$, with Hasse polynomial the above determinant.

The case $d=p^{n+1}-t$, $n\geq 2$, $p+1\leq t\leq p^2-1$, follows exactly the same lines, and we get the same result, the only difference being that the indices in the sum defining the $\theta_i$ run over $\{t,\ldots,p^2-1\}$.

\subsubsection{$ p^2-2p+2\leq d\leq p^2-p-1$} The case $d=p^2-p-1$ comes in the same way from Proposition \ref{minirr2}. We get $((p+1)(p-1),p+1)$ as first vertex, and the Hasse polynomial has the same form as above, with $\theta_j=\frac{1}{2}\sum_{i=p+1}^{2p-2} c_{p^{2}-i}^{p}\left\{f^2\right\}_{pi-j}$, $1\leq j\leq p-1$.

The case $d=p^2-t$, $p+1\leq t\leq 2p-2$ gives the same result, the only difference being that the indices in the sum defining the $\theta_i$ run over $\{t,\ldots,2p-2\}$.

\subsubsection{$p^{n+1}-p+1\leq d\leq p^{n+1}-2$, $n\geq 1$} We finally treat the case $d=p^{n+1}-2$, with $n\geq 1$. Here the matrix $\overline{M}(\Gamma)$ is no longer invertible. From Proposition \ref{cas1} (i), the minimal support is
$$\{1,\ldots,p^n,ip^k,~ 2\leq i\leq p-1,~ 0\leq k\leq n-1\}$$
If we denote by $e(i)$ the basis vector of $V$ associated to the element $i$ of the minimal support, we can describe the action of $\varphi$ (the semi linear morphism of $V$ whose matrix is the transpose of $\overline{M}(\Gamma)$) on this basis: we get
$$\varphi(e(ip^k))=e(ip^{k+1}),~ 0\leq k\leq n-2,~ 1\leq i\leq p-1,~ \varphi(e(p^{n-1}))=e(p^n)$$
$$\varphi(ip^{n-1})=c_{ip^n-1}e(1),~ 2\leq i\leq p-1,~ f(e(p^n))=\sum_{i=2}^{p-1} c_{p^{n+1}-i}e(i)$$
We see that for any basis vector $e(i)$, one of its iterates lands in the space $V_1$ generated by $e(1)$ and its iterates; we deduce that $V_{ss}$ is contained in $V_1$. Assuming that at least one of the products $c_{p^{n+1}-i}c_{ip^n-1}$, $2\leq i\leq p-1$ is non zero, the $2n+1$ vectors $e(1),\varphi(e(1)),\ldots,\varphi^{2n}(e(1))$ are linearly independent, and we have
$$\varphi^{2n+1}(e(1))=\left(\sum_{i=2}^{p-1} c_{p^{n+1}-i}^{p^n}c_{ip^n-1}\right)e(1)$$
We deduce that the restriction of $\varphi$ to $V_1$ is an isomorphism if, and only if the sum above is non zero; in this case we must have $V_1=V_{ss}$, and $V_{ss}$ has dimension $2n+1$. In other words, the first vertex of the generic Newton polygon $\GNP(d,p)$ is $((2n+1)(p-1),2)$, with associated Hasse polynomial the above sum.

In the case $d=p^{n+1}-t$, $2\leq t\leq p-1$, we reason the same way, except that the minimal support is smaller since the $i$ in $ip^k$ runs over $\{t,\ldots,p-1\}$, and the sum defining the Hasse polynomial runs over this last index set.


\end{document}